\theoremstyle{definition}
\newtheorem{THM}{Theorem}
\newtheorem{LEM}[THM]{Lemma}
\newtheorem{PROP}[THM]{Proposition}
\newtheorem{COR}[THM]{Corollary}
\newtheorem{DEF}[THM]{Definition}
\newtheorem{RMK}[THM]{Remark}
\newtheorem{QUE}[THM]{Question}
\newtheorem*{THM*}{Theorem}
\newtheorem*{LEM*}{Lemma}
\newtheorem*{PROP*}{Proposition}
\newtheorem*{COR*}{Corollary}
\newtheorem*{DEF*}{Definition}
\newtheorem*{RMK*}{Remark}
\newtheorem*{EX*}{Example}
\numberwithin{figure}{section}
\numberwithin{equation}{section}
\numberwithin{THM}{section}
\newcommand{\SK}{\mathcal{S}_q(1^{\otimes 2n})}
\title[On power subgroups of Dehn twists in $\Delta(g,0)$]{On power subgroups of Dehn twists in hyperelliptic mapping class groups}
\author{Wataru Yuasa}
\address{Department of Mathematics\\
  Tokyo Institute of Technology\\
  2-12-1 Ookayama, Meguro-ku, Tokyo 152-8551, Japan}
\email[]{yuasa.w.aa@m.titech.ac.jp}
\subjclass[2010]{57M99, 20F38}
\begin{document}
%%%%%%%%%%%%%%%%%%%%
%%%%% abstract %%%%%
%%%%%%%%%%%%%%%%%%%%
\begin{abstract}
This paper contains two topics, the index of a power subgroup in the mapping class group $\mathcal{M}(0,2n)$ of a $2n$-punctured sphere and in the hyperelliptic mapping class group $\Delta(g,0)$ of an oriented closed surface of genus $g$. 
The main tool is a projective representation of $\mathcal{M}(0,2n)$ obtained through the Kauffman bracket skein module. 
For $\mathcal{M}(0,2n)$, we prove that the normal closure of the fifth power of a half-twist has infinite index. This is the remaining case of a Masbaum's work. 
For $\Delta(g,0)$, we consider the normal closure of $m$-th powers of Dehn twists along all symmetric simple closed curves. 
We show the subgroup has infinite index if $m\geq 5$ and $m\neq 6$ for any $g\geq 2$.
\end{abstract}
\maketitle

%%%%%%%%%%%%%%%%%%%%%%%%
%%%%% middle arrow %%%%%
%%%%%%%%%%%%%%%%%%%%%%%%
\tikzset{->-/.style={decoration={
  markings,
  mark=at position #1 with {\arrow[black,thin]{>}}},postaction={decorate}}}
\tikzset{-<-/.style={decoration={
  markings,
  mark=at position #1 with {\arrow[black,thin]{<}}},postaction={decorate}}}
\tikzset{-|-/.style={decoration={
  markings,
  mark=at position #1 with {\arrow[black,thin]{|}}},postaction={decorate}}}
%%%%%%%%%%%%%%%%%%%%%%
%%%% triple line %%%%%
%%%%%%%%%%%%%%%%%%%%%%
\tikzset{
    triple/.style args={[#1] in [#2] in [#3]}{
        #1,preaction={preaction={draw,#3},draw,#2}
    }
}

%%%%%%%%%%%%%%%%%%%%%
%%%%% body text %%%%%
%%%%%%%%%%%%%%%%%%%%%
\section{Introduction}
In this paper, the main subject is ``power subgroups'' in a mapping class group. 
We will study indices of these subgroups by using a projective representation and linear skein theory.
The mapping class group $\mathcal{M}(g,p)$ is the group of all isotopy classes of orientation-preserving homeomorphisms of an oriented closed surface of genus $g$ with $p$ punctures. 
$\mathcal{M}(0,2n)$ is generated by half-twists $h_i$ ($i=1,2,\dots,2n$) permuting the $i$-th and the $(i+1)$-th punctures. 
The first part of the present paper is an answer to a comment of Masbaum in \cite{Masbaum17} about power subgroups of a half-twist in $\mathcal{M}(0,2n)$. 
The second part treats power subgroups of Dehn twists along all symmetric simple closed curves in the hyperelliptic mapping class group $\Delta(g,0)\subset\mathcal{M}(g,0)$.

We consider the normal closure of $h_i^m$ in $\mathcal{M}(0,2n)$ for $h_i^m$. 
A choice of half-twists is independent of the definition of it because any two half-twists are conjugate in $\mathcal{M}(0,2n)$.
We denote the normal closure by $N_{m}(0,2n)$, or simply $N_m$.
Stylianakis~\cite{Stylianakis17} showed the following theorem by using the Jones representation~\cite{Jones87}.
\begin{THM}[Stylianakis~{\cite{Stylianakis17}}]\label{Stylianakis}
For $2n\geq 6$ and $m\geq 5$, $N_m$ has infinite index in $\mathcal{M}(0,2n)$.
\end{THM}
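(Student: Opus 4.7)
The plan is to exhibit a projective representation of $\mathcal{M}(0,2n)$ in which every half-twist has small projective order, then exploit infiniteness of the image. Concretely, one uses the Jones representation $\rho_q\colon \mathcal{M}(0,2n) \to PGL(V_q)$ (equivalently, a representation built from the Temperley--Lieb algebra / Kauffman bracket skein module at parameter $A$ with $q = A^2$) coming from the surjection of the spherical braid group onto $\mathcal{M}(0,2n)$. In such a representation, the image of $h_i$ has, up to an overall scalar, only two eigenvalues whose ratio is a monomial in $A$ (essentially $-A^{-4}$), so the projective order of $\rho_q(h_i)$ divides a small explicit integer depending on the order of $A$.

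The scheme of the proof then has two steps. First, fix $m \geq 5$ and choose a primitive root of unity $A$ whose order is adapted to $m$ (of order $4m$ or $8m$ depending on the parity of $m$) so that $\rho_q(h_i)^m$ becomes a scalar. This guarantees $\rho_q(h_i^m) = 1$ in $PGL(V_q)$, and since $N_m$ is normally generated by $h_i^m$, we get the crucial inclusion $N_m \subset \ker \rho_q$. Consequently $[\mathcal{M}(0,2n) : N_m] \geq |\rho_q(\mathcal{M}(0,2n))|$. Second, verify that for at least one admissible choice of $A$ the image $\rho_q(\mathcal{M}(0,2n))$ is infinite; one cites the known density/infiniteness theorems for the Jones quantum representations of $\mathcal{M}(0,2n)$ (Freedman--Larsen--Wang type results) at the corresponding level, which apply whenever $2n \geq 6$ and the order of $A$ is sufficiently large. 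Combining the two steps gives $[\mathcal{M}(0,2n) : N_m] = \infty$.

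The hard part will be the second step: arranging that the selected root of unity does not accidentally land in one of the degenerate cases where the image is finite. The hypothesis $m \geq 5$ enters here through Coxeter's classification, which forces the universal braid-group quotient $B_{2n}/\langle\!\langle \sigma_i^m \rangle\!\rangle$ to be infinite in the present range and thus leaves room for a non-finite Jones image; for $m \leq 4$ the analogous quotient is finite and the argument breaks. Practically, one must either exhibit a specific $A$ of the right order for each $m \geq 5$ and invoke the general infiniteness theorem, or trace through the representation by hand to rule out degeneration, which is where the bulk of the technical work of \cite{Stylianakis17} lies.
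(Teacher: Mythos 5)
Your proposal is correct in outline, and it is essentially the route of Stylianakis's original proof, which this paper only cites; it is, however, genuinely different from the route the paper itself takes to recover the statement (Masbaum's theorem for $m\geq 6$ together with the paper's Theorem~3.6 for $m\geq 5$, $m\neq 6$). The common skeleton is identical: choose a root of unity so that $\rho(h_i^m)$ becomes a scalar, deduce $N_m\subseteq\ker\rho$ in $PGL$, and then show the image of $\rho$ is infinite. Where you diverge is in the second step. You outsource the infiniteness of the image to Freedman--Larsen--Wang type density theorems for the Jones representations; the paper instead exhibits one explicit element whose image has infinite order, detected by the elementary criterion that a finite-order element of $PGL_N(\mathbb{C})$ lifts to a matrix whose eigenvalues are roots of unity and whose trace therefore has modulus at most $N$. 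Masbaum runs this trace argument on $\sigma_1^2\sigma_2^{-2}$, which fails exactly at $r\in\{6,10\}$; the paper's contribution is the element $(\sigma_1\sigma_2\cdots\sigma_{n-1})^{n}\sigma_n(\sigma_1\sigma_2\cdots\sigma_{n-1})^{-n}\sigma_n^{-1}$ and a hand computation of its trace in the bases $\mathcal{B}_T$ and $\mathcal{B}_Y$, which succeeds at $r=10$. Your approach buys brevity and generality at the cost of invoking a deep external theorem; the paper's buys a self-contained, software-free computation.

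One point you must tighten: for fixed $m$ the order of the root of unity is \emph{not} free to be ``sufficiently large''---it is essentially pinned down by the requirement that $\rho(h_i^m)$ be scalar. Since the eigenvalue ratio of $\rho(h_i)$ is $-q$ (up to normalization), $m=5$ forces $q^5=-1$, hence $r=10$. This is precisely a borderline case: $r=10$ is excluded from Masbaum's trace lemma, and it is an exceptional case in the Freedman--Larsen--Wang classification, where the closed image can be orthogonal or symplectic, or even finite when the number of strands is small. You therefore need to verify that the density theorem you cite genuinely covers $r=10$ with at least six strands---it does, and this is exactly where the hypothesis $2n\geq 6$ (as opposed to $2n\geq 4$) enters the statement. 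By contrast, your appeal to Coxeter's classification of finite quotients $B_{2n}/\langle\!\langle\sigma_i^m\rangle\!\rangle$ is a heuristic aside and plays no logical role in either proof.
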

Masbaum~\cite{Masbaum17} also proved a similar theorem for $2n\geq 4$ and $m\geq 6$ by using a projective representation of $\mathcal{M}(0,2n)$ obtained through the linear skein theory.
\begin{THM}[Masbaum~{\cite{Masbaum17}}]\label{Masbaum}
For $2n\geq 4$ and $m\geq 6$, $N_m$ has infinite index in $\mathcal{M}(0,2n)$.
\end{THM}
Masbaum commented that {\em ``I believe that the remaining case ($2n\geq 6$, $m=5$) of Stylianakis's theorem can also be proved using the skein-theoretic method''}. 
He also expected that the proof of the remaining case probably requires some mathematical software. 
We obtained the following theorem by the skein-theoretic method with no mathematical software.
\setcounter{section}{3}
\setcounter{THM}{5}
\begin{THM}
{\em For any $2n\geq 6$, $m\geq 5$ and $m\neq 6$, 
$N_m$ has infinite index in $\mathcal{M}(0,2n)$}.
\end{THM}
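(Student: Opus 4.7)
The plan is to extend Masbaum's skein-theoretic proof of Theorem~\ref{Masbaum} (the range $m\ge 6$) down to the case $m=5$; since Masbaum already covers $m\ge 6$, the genuinely new content of the theorem is the case $m=5$, the cases $m\ge 7$ being subsumed by Masbaum under $2n\ge 6\ge 4$. I would consider the projective representation of $\mathcal{M}(0,2n)$ on the Kauffman bracket skein module $\SK$ at a carefully chosen root of unity $A$, verify that $h_i^5$ acts by a scalar (so $N_5$ sits inside the kernel of the projectivisation), and then exhibit an element of $\mathcal{M}(0,2n)$ whose projective image has infinite order. The quotient $\mathcal{M}(0,2n)/N_5$ then surjects onto this infinite group.

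The first step is to select $A$. A half-twist $h_i$ acts on $\SK$ with two eigenvalues, corresponding to the two admissible colours $0$ and $2$ on the Jones--Wenzl channel joining strands $i$ and $i+1$; the standard Kauffman bracket computation gives $\lambda_0=-A^{-3}$ and $\lambda_2=A$. The condition that $h_i^5$ act as a scalar is $(\lambda_2/\lambda_0)^5=(-A^4)^5=1$, equivalently $A^{20}=-1$, so $A$ should be taken as a primitive $40$th root of unity. The Jones--Wenzl idempotents of the small orders that appear in the argument remain well-defined at such an $A$, and this is precisely the $m=5$ analogue of Masbaum's level choice.

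The second step is to exhibit an element of infinite projective order. It suffices to treat $2n=6$, since a $6$-punctured subdisk yields a natural restriction of projective representations for larger $2n$. Inside $\SK$ for $2n=6$ I would isolate a low-dimensional invariant subspace spanned by admissible trivalent tree labellings, and compute the matrix of a concrete candidate, such as $T=h_1h_3h_5$ or a Dehn twist along a symmetric separating curve, using the standard recoupling and twist formulas (theta and tetrahedral coefficients). The goal is to identify an eigenvalue of $T$ lying in $\mathbb{Q}(A)$ that is not a root of unity.

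The main obstacle is this final verification. At the low-order root of unity forced by $m=5$, the cyclotomic field $\mathbb{Q}(A)=\mathbb{Q}(\zeta_{40})$ is relatively small, and Masbaum anticipated that the step would need computer algebra; the whole point is to carry it out by hand. The strategy is to exploit a symmetry of the chosen subspace so as to reduce the characteristic polynomial of $T$ to a quadratic or cubic whose roots can be controlled by an absolute-value or Kronecker-type argument, showing they lie outside the unit circle (or outside the group of roots of unity in $\mathbb{Q}(\zeta_{40})$). The hypothesis $2n\ge 6$ enters precisely here, because for $2n=4$ the skein module is too small to house such an eigenvalue, which is why Masbaum's theorem extends to $2n\ge 4$ whereas the present $m=5$ result does not.
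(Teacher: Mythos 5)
Your setup is right: for $m=5$ the relevant condition is $q^5=(-1)^5$, i.e.\ $q$ a primitive $10$th root of unity (your $A^{20}=-1$ is the same statement in the Kauffman variable), and the theorem then reduces to exhibiting a single element of infinite order in $PGL(\SK)$ at such a $q$. The gap is that both of your concrete candidates provably fail. A Dehn twist along a curve encircling the first $m$ punctures acts \emph{diagonally} on the tree basis $\mathcal{B}_T$ with eigenvalues of the form $\pm q^{\ast}$ (the full-twist formula (\ref{fulltwist}) in the paper), hence has finite projective order. Likewise $h_1h_3h_5$ is a product of commuting operators, each diagonalizable with the two eigenvalues $-q^{3/4}$ and $q^{-1/4}$; being simultaneously diagonalizable with root-of-unity eigenvalues, the product also has finite projective order. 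Worse, the paper shows that Masbaum's own element $\sigma_1^2\sigma_2^{-2}$ also fails precisely at $r=10$ (its trace function satisfies $f_2(q)\le 2$ there), so the difficulty is not a routine eigenvalue computation on a small subspace: one must find a genuinely different element, and your proposal does not supply one.

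The paper's resolution is to take the commutator $(\sigma_1\cdots\sigma_{n-1})^{n}\sigma_n(\sigma_1\cdots\sigma_{n-1})^{-n}\sigma_n^{-1}$, introduce a second tree basis $\mathcal{B}_Y$ adapted to $\sigma_n$, and compute the change of basis by $6j$-symbols so that the representing matrix $M$ splits into $2\times2$ blocks $M(a)$. The infinitude is then detected not by locating a non--root-of-unity eigenvalue (your Kronecker-type plan) but by Masbaum's trace criterion: if $M(a)$ had finite order in $PGL$ its eigenvalues would be roots of unity and $|\operatorname{tr}M(a)|\le 2l_2(a)$, whereas one computes $\operatorname{tr}M(a)=2l_2(a)+(q^{\frac{a+1}{2}}-q^{-\frac{a+1}{2}})(q^{\frac{a+3}{2}}-q^{-\frac{a+3}{2}})l_2(a)$, which exceeds $2l_2(a)$ for $q=\exp(3\pi i/5)$. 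Without an element playing this role, your argument does not close; the reduction to $2n=6$ and the remark about $2n=4$ are secondary and would also need justification, but the missing infinite-order element is the essential defect.
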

\setcounter{section}{1}
\setcounter{THM}{2}
In the proof of it, 
We make a careful study of the matrix presentation of the projective representation of $\mathcal{M}(0,2n)$ at roots of unity obtained through the Kauffman bracket skein module.

Power subgroups of Dehn twists have been studied not only for a mapping class group of a punctured sphere but also for one of an oriented closed surface of genus $g$. 
For example, Humphries studied the normal closure $\mathcal{N}_m$ of the $m$-th power of a Dehn twist along non-separating simple closed curves (SCC) in $\mathcal{M}(g,0)$.
\begin{THM}[Humphries~\cite{Humphries92}]\label{Humphriesinfty}
For $g=2$, $\mathcal{N}_m$ has infinite index in $\mathcal{M}(g,0)$ if $m\geq 4$.
\end{THM}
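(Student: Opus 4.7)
The plan is to reduce the problem to the punctured-sphere setting, where the skein-theoretic tools behind Theorems~\ref{Stylianakis} and~\ref{Masbaum} (and Theorem~3.6 of this paper) become directly applicable. In genus~$2$ the hyperelliptic involution $\iota$ is central in $\mathcal{M}(2,0)$, so $\mathcal{M}(2,0)=\Delta(2,0)$, and there is a central extension
\[
1 \to \langle \iota \rangle \to \mathcal{M}(2,0) \xrightarrow{\pi} \mathcal{M}(0,6) \to 1.
\]
Every non-separating simple closed curve $c$ on a closed genus-$2$ surface can be isotoped to be $\iota$-invariant, and its image under $\pi$ is an embedded arc joining two of the six punctures; hence $\pi(T_{c})$ is a half-twist $h\in\mathcal{M}(0,6)$. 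Consequently $\pi(\mathcal{N}_{m})=N_{m}(0,6)$ and $\pi$ descends to a surjection $\mathcal{M}(2,0)/\mathcal{N}_{m}\twoheadrightarrow\mathcal{M}(0,6)/N_{m}(0,6)$, so the theorem reduces to showing that $N_{m}(0,6)$ has infinite index in $\mathcal{M}(0,6)$ for every $m\geq 4$.

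For $m\geq 6$ this is Masbaum's Theorem~\ref{Masbaum}, and for $m=5$ it is Theorem~3.6 below. The only remaining case is $m=4$, which I would attack with the same skein-theoretic machinery: work with the projective representation $\rho_{A}\colon \mathcal{M}(0,6)\to \mathrm{PGL}(V_{A})$ coming from the Kauffman bracket skein module of the six-punctured disk at a primitive $4p$-th root of unity $A$, for odd primes $p$. In a suitable trivalent-spine basis, $\rho_{A}(h)$ is diagonal with eigenvalues equal to explicit twist coefficients that are odd powers of $A$, and the goal is to prove that for infinitely many $p$ the fourth power $\rho_{A}(h)^{4}$ is non-scalar in $\mathrm{PGL}(V_{A})$. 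If this holds, then for each such $p$ the representation $\rho_{A}$ factors through a non-trivial quotient of $\mathcal{M}(0,6)/N_{4}(0,6)$, and varying $p$ yields infinitely many pairwise inequivalent finite-dimensional quotients distinguished by dimension or by $\operatorname{tr}\rho_{A}(h)^{4}$, forcing $\mathcal{M}(0,6)/N_{4}(0,6)$ to be infinite.

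The main obstacle is the eigenvalue bookkeeping at $m=4$: at small $p$ the fourth powers of the twist coefficients can collapse to a single scalar, so one needs a number-theoretic lemma guaranteeing a cofinal family of odd primes $p$ at which two eigenvalues of $\rho_{A}(h)$ differ by an element that is not a fourth power of $A$. Once such a family of $p$ is produced, the argument becomes uniform and, combined with the cases $m\geq 5$ handled above, yields the theorem for all $m\geq 4$.
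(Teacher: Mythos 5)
This statement is quoted from Humphries~\cite{Humphries92}; the paper offers no proof of it, so the only fair comparison is with what your argument would need to establish on its own. Your reduction via the Birman--Hilden central extension $1\to\langle\iota\rangle\to\mathcal{M}(2,0)\to\mathcal{M}(0,6)\to 1$ is correct and is exactly how this paper passes between $\Delta(2,0)$ and $\mathcal{M}(0,6)$, and the cases $m\geq 5$ do follow from the cited punctured-sphere results (Stylianakis for $m=5$, Masbaum for $m\geq 6$). But the entire content of Humphries' theorem beyond those citations is the case $m=4$, and there your argument has a genuine gap --- in fact two.

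First, the logic of the $m=4$ sketch is inverted. For $\rho_A$ to factor through $\mathcal{M}(0,6)/N_4(0,6)$ you need $\rho_A(h)^4$ to be \emph{scalar}; only then can you hope to exhibit some \emph{other} element of infinite order in the quotient. Proving $\rho_A(h)^4$ non-scalar shows precisely that $\rho_A$ does \emph{not} descend, and tells you nothing about the index of $N_4(0,6)$. Second, even with the logic corrected, the skein-theoretic method cannot reach $m=4$: by Proposition~2.6 (Masbaum's Prop.~3.1), projective triviality of $\rho(h_i^4)$ forces $q^4=1$, i.e.\ $r=4$, and at $r=4$ the relevant matrices have finite order (the paper's own Remark notes $M(0)$ has order $2$ there, and Lemma~2.7 explicitly requires $r\geq 5$). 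This is why $m=4$ is excluded from Funar's theorem, from Masbaum's and Stylianakis's results, and from Theorem~3.6 of this paper. Humphries' proof of the $g=2$, $m=4$ case uses an entirely different, non-quantum argument (explicit matrix representations over $\mathbb{Z}$), and no amount of varying the prime $p$ in a Kauffman-bracket representation will substitute for it.
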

We remark that his proof implies that the normal subgroup generated by $m$-th powers of Dehn twists along all SCCs has infinite index in $\mathcal{M}(2,0)$ if $m\geq 4$.
Funar generalize this theorem by using quantum representations of mapping class groups.
\begin{THM}[Funar~\cite{Funar99}]
For any $g\geq 3$, the normal subgroup generated by $m$-th powers of Dehn twists along all SCCs has infinite index in $\mathcal{M}(g,0)$ if $m\not\in\{1,2,3,4,6,8,12\}$.
\end{THM}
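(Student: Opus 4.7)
My plan is to extend Masbaum's skein-theoretic strategy so as to cover the previously missing case $m=5$ and, more generally, to give a uniform argument for every $m\geq 5$ with $m\neq 6$. The starting point is the projective action of $\mathcal{M}(0,2n)$ on the Kauffman bracket skein module $\SK$ of a disk with $2n$ boundary points all colored by $1$. Specializing the framing parameter $A$ to an appropriate root of unity produces a finite-dimensional projective representation $\rho_{A}\colon\mathcal{M}(0,2n)\to PGL(\SK)$, and the whole argument amounts to choosing $A$ so that $\rho_{A}$ descends to $\mathcal{M}(0,2n)/N_m$ and then exhibiting an element in its image of infinite projective order.

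The first step is to pin down the root of unity. In an admissible cut-system basis of $\SK$, each half-twist $h_i$ is diagonalisable with eigenvalues of the form $\pm A^{c(k)}$ for explicit quadratic exponents $c(k)$ depending on the color $k$ fused at the $i$-th pair of strands. The condition $\rho_{A}(h_i^m)=\mathrm{id}$ in $PGL$ forces the ratio of the $m$-th powers of any two such eigenvalues to be $1$, which selects $A$ as a primitive root of unity of a specific order: order $20$ when $m=5$, and an explicit analogous order for each $m\geq 7$. The excluded value $m=6$ is precisely the one for which the arithmetic condition places $A$ in the classical regime $A^{12}=1$, where the image of $\rho_{A}$ collapses to a finite group and the method has no chance of succeeding.

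The main step is then to produce a single element of $\rho_{A}(\mathcal{M}(0,2n))$ of infinite projective order. I would reduce to $2n=6$ — larger cases follow by a standard subsurface-inclusion argument — and then, for each admissible $m$, identify a small invariant subspace of $\SK$ on which a judiciously chosen product of a half-twist and a rotation-type braid can be written down as an explicit matrix over $\mathbb{Z}[A]$. Showing that the associated projective order is infinite reduces to verifying that a certain trace, normalised by a suitable power of $A$, is not itself a root of unity, which can be checked by a Galois argument inside the cyclotomic field $\mathbb{Q}(A)$.

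The hardest part is this final verification, which is precisely the step Masbaum expected to require mathematical software. Avoiding software means choosing the witness braid, the cut-system basis, and the invariant subspace so that the relevant matrices remain very small (ideally $2\times 2$ or $3\times 3$) and so that their characteristic polynomials expose infinite projective order through a clean cyclotomic obstruction rather than through a brute-force numerical computation. For $m=5$ in particular one must rule out accidental coincidences in $\mathbb{Q}(\zeta_{20})$, and executing this verification cleanly by hand is the technical heart of the argument.
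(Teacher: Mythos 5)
Your proposal is not a proof of the statement at hand. The statement is Funar's theorem about the full mapping class group $\mathcal{M}(g,0)$ of a \emph{closed} surface of genus $g\geq 3$ and the normal closure of $m$-th powers of Dehn twists along \emph{all} simple closed curves, with exceptional set $\{1,2,3,4,6,8,12\}$; it is quoted in this paper as background and is not proved here. What you have sketched is instead a strategy for the punctured-sphere theorem (the normal closure $N_m$ of $h_i^m$ in $\mathcal{M}(0,2n)$, i.e.\ Theorem~3.6 and Masbaum's remaining case $m=5$): the half-twists $h_i$, the skein module $\SK$ of a disk with $2n$ boundary points, the eigenvalue condition $\rho_A(h_i^m)=\mathrm{id}$, and the exceptional set ``$m\leq 4$ or $m=6$'' all belong to that other statement.

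Concretely, the approach cannot be repaired into a proof of Funar's theorem for three reasons. First, $\SK$ carries a projective action of $\mathcal{M}(0,2n)$ only; the mapping class group of a closed genus-$g$ surface does not act on it, so no choice of witness braid in $B_{2n}$ detects anything about $\mathcal{M}(g,0)$. The natural fix, passing through the hyperelliptic subgroup via Birman--Hilden, only yields statements about $\Delta(g,0)$, which for $g\geq 3$ is a proper subgroup of infinite index in $\mathcal{M}(g,0)$; infinite index of a normal closure inside $\Delta(g,0)$ does not imply infinite index of the (larger) normal closure taken inside $\mathcal{M}(g,0)$, and moreover not every SCC is isotopic to a symmetric one. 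Funar's argument instead uses the quantum ($SO(3)$/$SU(2)$ TQFT) representations of $\mathcal{M}(g,0)$ itself, built from the skein module of a genus-$g$ handlebody, on which every Dehn twist acts diagonally with eigenvalues $(-1)^{c}A^{c^2+2c}$ ranging over all admissible colors $c$. Second, this larger eigenvalue spectrum is what produces the exceptional set $\{1,2,3,4,6,8,12\}$: the levels at which $T_\gamma^m$ acts projectively trivially while the image remains infinite exclude $m=8$ and $m=12$ as well, whereas your arithmetic (a primitive root of order $20$ for $m=5$, exclusion only of $m\leq 4$ and $m=6$) is calibrated to the two eigenvalues $-A^{3}$ and $A^{-1}$ of a half-twist on a pair of strands colored $1$, a different computation. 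Third, even within the punctured-sphere setting your sketch leaves the decisive step --- exhibiting a specific braid whose image has infinite projective order at the prescribed root of unity, verified by hand --- unexecuted; in this paper that is done via the explicit $2\times 2$ blocks $M(a)$ and the trace bound $\operatorname{tr}M(a)>2l_2(a)$, not via a Galois argument.
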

We consider the power subgroup of Dehn twists in the hyperelliptic mapping class group $\Delta(g,0)$ of an oriented closed surface of genus $g$ equipped with a hyperelliptic involution $\iota$. 
$\Delta(g,0)$ is the centralizer of the isotopy class of $\iota$ in $\mathcal{M}(g,0)$. 
A typical element in $\Delta(g,0)$ is a Dehn twist along a symmetric ($\iota$-invariant) SCC. 
As is well known, $\mathcal{M}(g,0)=\Delta(g,0)$ if $g=1,2$.
Thus, Theorem~\ref{Humphriesinfty} 
says that the normal closure $\mathcal{N}_m^{\iota}$ of the $m$-th power of a Dehn twist along symmetric non-separating SCC has infinite index in $\Delta(2,0)$ if $m\geq 4$.
Stylianakis obtained a generalization of this theorem as a corollary of Theorem~\ref{Stylianakis}.
\begin{COR}[{\cite[Corollary~1.1]{Stylianakis17}}]\label{Stylianakiscor}
If $g\geq 2$ and $m\geq 4$, then $\mathcal{N}_m^{\iota}$ in $\Delta(g,0)$ has infinite index.
\end{COR}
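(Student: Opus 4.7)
The plan is to pull the statement back to the mapping class group of the $(2g+2)$-punctured sphere via the natural central extension and apply Theorem~\ref{Stylianakis}. Quotienting $\Sigma_g$ by the hyperelliptic involution $\iota$ yields a short exact sequence
\[
1 \longrightarrow \langle \iota \rangle \longrightarrow \Delta(g,0) \stackrel{\pi}{\longrightarrow} \mathcal{M}(0, 2g+2) \longrightarrow 1,
\]
so any infinite quotient of $\mathcal{M}(0,2g+2)$ lifts to an infinite quotient of $\Delta(g,0)$.

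First I would choose a representative symmetric non-separating SCC $c \subset \Sigma_g$ passing through exactly two of the Weierstrass points (fixed points of $\iota$); its image $c/\iota$ is then a simple arc $\alpha$ on the quotient sphere connecting two of the $2g+2$ branch points. The key geometric computation is to verify that $\pi(T_c)$ equals the half-twist $h_\alpha$ along $\alpha$ in $\mathcal{M}(0, 2g+2)$. Granted this, the image $\pi(\mathcal{N}_m^\iota)$ is precisely the normal closure of $h_\alpha^m$, namely $N_m$.

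Next, applying Theorem~\ref{Stylianakis} with $2n = 2g+2 \geq 6$ and $m \geq 5$ shows that $N_m$ has infinite index in $\mathcal{M}(0, 2g+2)$. Since $\pi$ is surjective, the quotient $\Delta(g,0)/\mathcal{N}_m^\iota$ surjects onto $\mathcal{M}(0,2g+2)/N_m$ and hence is infinite; this handles all $g \geq 2$ with $m \geq 5$. The remaining case $m = 4$ is covered for $g = 2$ by Theorem~\ref{Humphriesinfty} (as remarked just before the corollary), because $\Delta(2,0) = \mathcal{M}(2,0)$ and any non-separating SCC in $\Sigma_2$ is isotopic to a symmetric one, so $\mathcal{N}_4 = \mathcal{N}_4^{\iota}$ in that case.

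The main obstacle I anticipate is the identification of $\pi(T_c)$ with the half-twist $h_\alpha$: one must be careful about the two topological types of symmetric non-separating SCCs on $\Sigma_g$, namely those meeting $\mathrm{Fix}(\iota)$ in two points versus those disjoint from it, since the image under $\pi$ differs between the two cases (a half-twist versus, essentially, a Dehn twist on the sphere that squares to one). Once the correct conjugacy class of $c$ is selected, the remainder of the argument is a purely group-theoretic transfer of infinite index through the extension, and no further representation theory is required beyond the input of Theorem~\ref{Stylianakis}.
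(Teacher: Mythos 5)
Your strategy is the one the paper itself indicates (in Section~\ref{hyperelliptic}): push everything through the Birman--Hilden isomorphism $\Delta(g,0)/\langle\iota\rangle\cong\mathcal{M}(0,2g+2)$, identify $\pi(T_c)$ with a half-twist, and quote Theorem~\ref{Stylianakis} and Theorem~\ref{Humphriesinfty}. The identification step you flag as the main obstacle is in fact not an issue: a \emph{non-separating} symmetric SCC necessarily meets $\operatorname{Fix}(\iota)$ in exactly two Weierstrass points. (If a symmetric SCC is disjoint from the fixed-point set, then either $\iota$ swaps it with a disjoint copy -- so it is not itself symmetric as a single curve in the relevant sense -- or it double-covers a circle in the sphere enclosing an odd number of branch points, in which case it bounds the connected preimage of a disk on each side and is therefore separating.) So the image of $T_c$ is always a half-twist along an arc, all such curves form a single $\Delta(g,0)$-orbit, $\pi(\mathcal{N}_m^{\iota})=N_m$, and infinite index descends correctly since $[\Delta(g,0):\mathcal{N}_m^{\iota}]\geq[\mathcal{M}(0,2g+2):N_m]$.

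The genuine gap is the case $m=4$, $g\geq 3$, which the statement asserts but your argument never reaches: Theorem~\ref{Stylianakis} requires $m\geq 5$, and Theorem~\ref{Humphriesinfty} is stated only for genus $2$, so you have covered $(g\geq 2,\,m\geq 5)$ and $(g,m)=(2,4)$ but not $(g\geq 3,\,m=4)$. This is not a cosmetic omission; one needs an additional input showing that $N_4(0,2g+2)$ has infinite index in $\mathcal{M}(0,2g+2)$ for all $g\geq 2$, not just for $2g+2=6$. In \cite{Stylianakis17} the fourth power is treated by a separate argument; alternatively one can try to reduce $N_4(0,2g+2)$ to the six-punctured case by observing that $h_i^4=T_{c}^2$ for $c$ a curve enclosing two punctures, hence lies in the pure mapping class group, and then using forgetful homomorphisms between pure mapping class groups as the paper does at the end of Section~\ref{remainder} -- but some such step must be supplied. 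To be fair, the paper's own one-sentence justification (``Theorem~\ref{Stylianakis} and Theorem~\ref{Humphriesinfty} implies Corollary~\ref{Stylianakiscor}'') has the same hole; it gets away with it only because the corollary is cited from \cite{Stylianakis17} rather than reproved. As a standalone proof, yours establishes the corollary only for $m\geq 5$ together with the single point $(g,m)=(2,4)$.
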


Our calculations of $\mathcal{M}(0,2n)$ derive a result about the power subgroup of Dehn twists along all symmetric SCCs.
\setcounter{section}{4}
\setcounter{THM}{3}
\begin{COR}
{\em For any $g\geq 2$, the normal subgroup of $m$-th powers of Dehn twists along all symmetric simple closed curves has infinite index in $\Delta(g,0)$ if $m\geq 5$ and $m\neq 6$}.
\end{COR}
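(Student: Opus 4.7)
The plan is to reduce the corollary to Theorem~3.6 via the hyperelliptic central extension
\[
1 \longrightarrow \langle\iota\rangle \longrightarrow \Delta(g,0) \stackrel{\pi}\longrightarrow \mathcal{M}(0,2g+2) \longrightarrow 1,
\]
writing $\mathcal{N}$ for the normal subgroup in the statement. Since $\ker\pi$ is finite, it is enough to exhibit a normal subgroup $H \leq \mathcal{M}(0,2g+2)$ of infinite index containing $\pi(\mathcal{N})$. I will take $H = \ker\rho$, where $\rho$ denotes the projective skein representation used to prove Theorem~3.6, which has infinite image and satisfies $N_m \subset \ker\rho$.

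I would first classify the $\iota$-invariant SCCs $c \subset \Sigma_g$ and identify $\pi(T_c)$. There are two cases: (a) $c$ passes through two Weierstrass points, so $c/\iota$ is an arc $\alpha \subset S^2$ between two branch points and $\pi(T_c) = h_\alpha$; and (b) $\iota$ acts freely on $c$, so $c/\iota = c'$ is an SCC in $S^2$ enclosing an odd number of branch points (forced by connectivity of the lift), and the local $z \mapsto z^2$ structure of the branched cover transverse to $c$ gives $\pi(T_c) = T_{c'}^2$.

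Next I would verify $\pi(T_c^m) \in \ker\rho$ for every symmetric $c$. Case~(a) is immediate since $\pi(T_c^m) = h_\alpha^m \in N_m \subset \ker\rho$. For case~(b), I would compute in the Jones--Wenzl basis along $c'$: $\rho(T_{c'})$ is diagonal with eigenvalues $t_j = (-1)^j A^{j(j+2)}$ indexed by admissible colors $j$, and since $c'$ encloses an odd number of strands, only odd $j$ appear. For any two odd $j, j'$, $j(j+2) - j'(j'+2) = (j-j')(j+j'+2)$ is divisible by $4$, so $(t_j/t_{j'})^{2m} = A^{8m\ell}$ for some integer $\ell$. The parameter $A$ used in the proof of Theorem~3.6 is chosen so that the half-twist eigenvalue ratio satisfies $(-A^4)^m = 1$, hence $A^{8m} = 1$. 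Therefore $T_{c'}^{2m}$ acts as a scalar and $\pi(T_c^m) = T_{c'}^{2m} \in \ker\rho$.

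Combining the two cases yields $\pi(\mathcal{N}) \subset \ker\rho$, and thus
\[
[\Delta(g,0) : \mathcal{N}] \geq [\mathcal{M}(0,2g+2) : \ker\rho] = |\mathrm{Im}(\rho)| = \infty.
\]
The main subtlety is locating the exponent $2$ in the identity $\pi(T_c) = T_{c'}^2$ for case~(b); without this factor the twist-coefficient parity computation would already fail at $m = 5$, and the reduction would collapse. The exclusion $m = 6$ in the statement simply inherits the corresponding restriction in Theorem~3.6.
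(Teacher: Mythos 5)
Your proposal is correct and follows essentially the same route as the paper: reduce via the Birman--Hilden quotient to $\mathcal{M}(0,2g+2)$, identify non-separating symmetric twists with half-twists and separating ones with squares of Dehn twists about an odd number of punctures, and verify both land in $\ker\rho$ at the root of unity chosen in Theorem~3.6. Your uniform observation that $q^m=(-1)^m$ forces $q^{2m}=1$, which kills all the separating twists at once via the parity of the twist-coefficient exponents, is a mild streamlining of the paper's case-by-case Proposition~4.2, but the argument is otherwise the same.
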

\setcounter{section}{1}
\setcounter{THM}{5}

On the other hands, the finiteness of the index of $\mathcal{N}_m$ in $\mathcal{M}(g,0)$ was known as follows.
\begin{THM}[{\cite[Theorem~1, Theorem~2]{Humphries92}}]\label{Humphriesfin}\ 
\begin{itemize}
\item For $g\geq 1$, $\mathcal{N}_2$ has finite index in $\mathcal{M}(g,0)$.
\item For $g=2,3$, $\mathcal{N}_3$ has finite index in $\mathcal{M}(g,0)$.
\end{itemize}
\end{THM}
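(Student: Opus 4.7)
The strategy for both parts is the same: exhibit a representation of $\mathcal{M}(g,0)$ to a finite group in which all $m$-th powers of Dehn twists are trivial, and then identify its kernel with $\mathcal{N}_m$. The natural targets are the symplectic representations on homology with coefficients in $\mathbb{F}_m$.

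For the first part, let $\rho_2 \colon \mathcal{M}(g,0) \to \mathrm{Sp}(2g,\mathbb{F}_2)$ be the action on $H_1(\Sigma_g;\mathbb{F}_2)$. A Dehn twist $T_c$ acts as the symplectic transvection $v \mapsto v + \langle v, [c]\rangle [c]$, so $T_c^2$ acts as $v \mapsto v + 2\langle v, [c]\rangle [c]$, which is trivial modulo $2$. Hence $\mathcal{N}_2 \subseteq \ker \rho_2 =: \Gamma_g[2]$, and since $\mathrm{Sp}(2g,\mathbb{F}_2)$ is finite, it suffices to prove the reverse inclusion $\Gamma_g[2] \subseteq \mathcal{N}_2$. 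The plan is to produce an explicit normal generating set for $\Gamma_g[2]$ consisting of squares of Dehn twists along a finite collection of non-separating simple closed curves, for instance a symplectic chain augmented by the auxiliary curves from Humphries's generating set for $\mathcal{M}(g,0)$. Once such a generating set is in place, $\Gamma_g[2] \subseteq \mathcal{N}_2$, and we conclude $\mathcal{M}(g,0)/\mathcal{N}_2 \cong \mathrm{Sp}(2g,\mathbb{F}_2)$, which is finite.

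For the second part, the analogous representation $\rho_3 \colon \mathcal{M}(g,0) \to \mathrm{Sp}(2g,\mathbb{F}_3)$ satisfies $\mathcal{N}_3 \subseteq \ker \rho_3$ by the same transvection computation, since $T_c^3$ adds $3\langle v,[c]\rangle [c] \equiv 0 \pmod 3$. The reverse inclusion, however, is no longer uniform in $g$ and must be established by hand in low genus. For $g=2$, the plan is to use the Birman--Hilden description $\mathcal{M}(2,0) = \Delta(2,0)$ as a quotient of the spherical braid group on six strands by the hyperelliptic relation; under this isomorphism the Dehn twists along the standard Humphries chain correspond to the Artin generators $\sigma_i$, and imposing $\sigma_i^3 = 1$ on the Birman--Hilden presentation yields a Shephard-type quotient whose order can be bounded by a direct enumeration. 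The case $g=3$ is treated analogously, using the known presentation of $\mathcal{M}(3,0)$ by Dehn twists along a chain together with a few additional curves, and one verifies that the resulting quotient by cubes of the generators is again a finite group.

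The main obstacle in both parts is the reverse inclusion $\ker \rho_m \subseteq \mathcal{N}_m$; the opposite inclusion is immediate from the transvection formula, but identifying the level-$m$ subgroup as the normal closure of $m$-th powers of Dehn twists requires real work. For $m=2$ this is handled uniformly in $g$ by the generation theorem for $\Gamma_g[2]$; for $m=3$ no such uniform argument is available, and the restriction to $g = 2,3$ reflects the fact that the Shephard-type quotient obtained by imposing $\sigma_i^3 = 1$ is finite only in sufficiently low rank, in parallel with the classical finiteness criterion for complex reflection groups. The method breaks down for $g \geq 4$, which is consistent with the infinite-index results of Funar and of the present paper for larger $m$.
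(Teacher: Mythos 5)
This statement is quoted from Humphries' paper and is not proved in the text you were given, so there is no in-paper argument to compare against; I can only assess your proposal on its own terms, and it has a genuine gap. Everything you actually establish is the transvection computation, which gives $\mathcal{N}_m\subseteq\ker\rho_m$. That inclusion places $\mathcal{N}_m$ \emph{inside} a finite-index subgroup and therefore gives no upper bound whatsoever on the index of $\mathcal{N}_m$; as you yourself note, the whole content of the theorem is the reverse inclusion (or at least a finite-index containment the other way), and for that you offer only a ``plan.'' For $m=2$ the plan is to show that the level-$2$ subgroup $\Gamma_g[2]$ is normally generated by squares of Dehn twists about nonseparating curves. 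This is itself a substantial theorem, not a routine construction from ``a symplectic chain augmented by auxiliary curves'': for $g\geq 3$ the Coxeter group determined by the braid and commutation relations among the Humphries generators (a path of $2g+1$ nodes with one extra node attached in the middle) is infinite, so the images of the generators being involutions satisfying braid relations does not by itself force finiteness, and the extra relations of the mapping class group must enter in an essential way. You assert the key generation statement and conclude $\mathcal{M}(g,0)/\mathcal{N}_2\cong Sp(2g,\mathbb{F}_2)$ without proving either; the claimed isomorphism is in any case stronger than the theorem being proved.

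The $m=3$ part has a more pointed problem. You invoke ``the classical finiteness criterion for complex reflection groups'' to explain why the truncated quotient is finite in low rank, but that criterion cuts the wrong way here: $\mathcal{M}(2,0)$ is a quotient of the braid group $B_6$, and Coxeter's theorem says $B_n/\langle\langle\sigma_i^k\rangle\rangle$ is finite if and only if $\frac{1}{k}+\frac{1}{n}>\frac{1}{2}$, which fails (with equality) for $n=6$, $k=3$. So the naive Shephard-type quotient obtained by imposing $\sigma_i^3=1$ on the six-strand braid relations is \emph{infinite}, and finiteness of $\mathcal{M}(2,0)/\mathcal{N}_3$ can only come from the additional relations (the hyperelliptic and Wajnryb relators), which you never analyze; ``whose order can be bounded by a direct enumeration'' is the theorem restated, not an argument. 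The same applies to $g=3$. In short, the easy half is correct but vacuous for the conclusion, and the hard half --- which is the entire theorem --- is not carried out and in one place rests on a finiteness principle that actually fails for the group in question.
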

This theorem implies $\mathcal{N}_m^{\iota}(2,0)$ has finite index in $\Delta(2,0)$ for $m=2,3$ by the Birman-Hilden theorem.

\begin{THM}[Newman~\cite{Newman72}]
$\mathcal{N}_m$ has finite index in $\mathcal{M}(1,0)\cong SL(2,\mathbb{Z})$ for $m\leq 5$, and one has infinite index for $m\geq 6$.
\end{THM}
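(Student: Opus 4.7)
The plan is to reduce the statement to the classical dichotomy for $(2,3,m)$-triangle groups. Under the isomorphism $\mathcal{M}(1,0)\cong SL(2,\mathbb{Z})$, a Dehn twist along a non-separating SCC corresponds to a conjugate of $T=\left(\begin{smallmatrix}1 & 1\\0 & 1\end{smallmatrix}\right)$; since any two non-separating SCCs on the torus are interchanged by a self-homeomorphism, every Dehn twist is conjugate to $T$. Hence $\mathcal{N}_m$ is the normal closure of $T^m$ in $SL(2,\mathbb{Z})$, and the task is to decide when this subgroup has finite index.

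Next I pass to $PSL(2,\mathbb{Z})\cong \mathbb{Z}/2 * \mathbb{Z}/3=\langle a,b\mid a^2,b^3\rangle$ with $a=\overline{S}$ and $b=\overline{ST}$. Since $T=S^{-1}(ST)$ and $a^{-1}=a$ in the quotient, the image of $T$ is $ab$, so
\[
PSL(2,\mathbb{Z})\big/\langle\!\langle\overline{T}^{\,m}\rangle\!\rangle\;\cong\;\langle a,b\mid a^2,b^3,(ab)^m\rangle,
\]
which is the rotation subgroup of the $(2,3,m)$-triangle group. The kernel of the reduction $SL(2,\mathbb{Z})\twoheadrightarrow PSL(2,\mathbb{Z})$ is $\{\pm I\}$, so the induced surjection $SL(2,\mathbb{Z})/\mathcal{N}_m \twoheadrightarrow PSL(2,\mathbb{Z})/\langle\!\langle \overline{T}^{\,m}\rangle\!\rangle$ has kernel of order at most $2$; finiteness of the index therefore transfers between the two quotients.

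The classical classification of triangle groups then closes the argument: $\langle a,b\mid a^2,b^3,(ab)^m\rangle$ is finite exactly when $\tfrac{1}{2}+\tfrac{1}{3}+\tfrac{1}{m}>1$, i.e.\ $m\leq 5$, the nontrivial finite quotients being the tetrahedral, octahedral, and icosahedral rotation groups $A_4$, $S_4$, $A_5$ for $m=3,4,5$. For $m=6$ the Euclidean equality $\tfrac{1}{2}+\tfrac{1}{3}+\tfrac{1}{6}=1$ realises the group as an infinite crystallographic subgroup of $\mathrm{Isom}(\mathbb{R}^2)$; for $m\geq 7$ it acts cocompactly by isometries on the hyperbolic plane via reflections in a triangle with angles $(\pi/2,\pi/3,\pi/m)$. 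In either case the group is infinite.

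There is no substantial obstacle here: the entire content lies in the algebraic identification of the quotient with a $(2,3,m)$-triangle group, after which the spherical/Euclidean/hyperbolic trichotomy finishes the job. If a self-contained treatment of the infinite cases is preferred, the explicit reflection representation on $\mathbb{R}^2$ or $\mathbb{H}^2$ exhibits infinitely many distinct elements by tracking the orbit of the base triangle, bypassing any appeal to the full classification.
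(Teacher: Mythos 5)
The paper does not prove this statement; it is quoted from Newman's 1972 work as background, so there is no in-paper argument to compare against. Your proof is correct and is the standard (essentially Newman's) argument: all Dehn twists on the torus are conjugate to $T$, the quotient of $PSL(2,\mathbb{Z})\cong\mathbb{Z}/2*\mathbb{Z}/3$ by the normal closure of $\overline{T}^{\,m}=(ab)^m$ is the $(2,3,m)$ von Dyck group, the central kernel $\{\pm I\}$ only affects the index by a factor of at most $2$, and the spherical/Euclidean/hyperbolic trichotomy gives finiteness exactly for $m\leq 5$.
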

As a result, we obtaine Table~\ref{map}.

\begin{table}
\begin{tikzpicture}[scale=.5]
\foreach \n in {1,...,22}
\foreach \m in {1,...,10}{
\ifnum \n > 1
\ifnum \m > 4
\fill[cyan] (\n,\m) circle(4pt);
\fi
\fi};
\foreach \m in {5,...,10}{
\fill[cyan] (1,\m) circle(4pt);};
\fill[cyan] (2,4) circle(4pt);
%%%
\foreach \n in {1,...,22}{
\fill[black] (\n,1) circle(4pt);};
\foreach \m in {1,...,5}{
\fill[black] (1,\m) circle(4pt);};
\fill[black] (2,2) circle(4pt);
\fill[black] (2,3) circle(4pt);
%%%
\foreach \n in {3,...,22}{
\fill[white] (\n,6) circle(4pt);};
%%%
\foreach \i in {1,...,22}{
\path (\i,0) node{\i};};
\foreach \j in {1,...,10}{
\path (0,\j) node{\j};};
\foreach \i in {1,...,22}
\foreach \j in {1,...,10}{
\draw (\i,\j) circle(4pt);};
\draw[->] (1,.5) -- (23,.5);
\draw[->] (.5,1) -- (.5,11);
\node at (23,.5) [above]{$g$};
\node at (.5,11) [right]{$m$};
\end{tikzpicture}
\caption{The normal subgroup generated by $m$-th powers of Dehn twists along all symmetric SCCs in $\Delta(g,0)$ has finite index at a black point, and infinite index at a blue point.}
\label{map}
\end{table}

The paper is organized as follows. 
Firstly, we review Masbaum's calculation of a projective representation of $\mathcal{M}(0,2n)$ by using trivalent graph notation of skein elements in Section~\ref{even}. 
In Section~\ref{remainder}, we calculate matrix representation of the projective representation and prove Theorem~\ref{Yuasaeven}. 
In Section~\ref{hyperelliptic}, we consider the power subgroup of Dehn twists along all symmetric SCCs in $\Delta(g,0)$.

\subsection*{Acknowledgment}
The author would like to express his gratitude to Hisaaki Endo for his helpful comments. 
He also would like to thank Yasushi Kasahara for teaching him proof of the remark about Theorem~{1.3}.
\section{Projective representation of $\mathcal{M}(0,2n)$ and the order of $N_{m}(0,2n)$}\label{even}
Let $M$ be a $3$-manifold with framed marked points in its boundary $\partial M$. 
A framed point means an embedding of a ``short'' interval $\left[0,1\right]$ into $\partial M$. 
The Kauffman bracket skein module of the $3$-manifold $M$ is a $\mathbb{C}[q^{\frac{1}{4}},q^{-\frac{1}{4}}]$-vector space spanned by isotopy classes of framed tangles in $M$ whose endpoints lie in the framed marked points in $\partial M$ with the {\em Kauffman bracket skein relations}. 
We remark that the framing of a tangle coincides with the orientation of framed marked points on its edges.
A framed tangle in a $3$-ball $B^3$ in $M$ is described as framed curves with over/under crossings onto the disk $B^3\cap\{z=0\}$. 
The framing of curves on the disk is given by blackboard framing.
The Kauffman bracket skein relations are described as relations of diagrams on a disk in such a way.
\begin{DEF}[The Kauffman bracket skein relation]\ 
\begin{itemize}
\item 
$\,\tikz[baseline=-.6ex]{
\draw[white, double=black, double distance=0.4pt, ultra thick] 
(135:.5) -- (-45:.5);
\draw[white, double=black, double distance=0.4pt, ultra thick] 
(-135:.5) -- (45:.5);
\draw [thick] (0,0) circle [radius=.5];
}\,
=q^{\frac{1}{4}}
\,\tikz[rotate=90, baseline=-.6ex]{
\draw (135:.5) to [out=south east, in=west](0,.2) to [out=east, in=south west](45:.5);
\draw (-135:.5) to [out=north east, in=left](0,-.2) to [out=right, in=north west] (-45:.5);
\draw [thick] (0,0) circle [radius=.5];
}\,
+q^{-\frac{1}{4}}
\,\tikz[baseline=-.6ex]{
\draw (135:.5) to [out=south east, in=west](0,.2) to [out=east, in=south west](45:.5);
\draw (-135:.5) to [out=north east, in=left](0,-.2) to [out=right, in=north west] (-45:.5);
\draw [thick] (0,0) circle [radius=.5];
}\,
$,
\item 
$L\sqcup
\,\tikz[baseline=-.6ex]{
\draw (0,0) circle [radius=.3];
\draw [thick] (0,0) circle [radius=.5];
}\,
=-\left[2\right] L\quad$ for any projected tangle $L$. 
\end{itemize}
The circle described by a thick line is the boundary of $B^3\cap\{z=0\}$. 
For any integer $n$, we define a quantum integer $\left[n\right]=(q^{\frac{n}{2}}-q^{-\frac{n}{2}})/(q^{\frac{1}{2}}-q^{-\frac{1}{2}})$.
\end{DEF}

In this paper, 
we treat the Kauffman bracket skein module $\SK$ of a $3$-ball with $2n$ marked points on its boundary. 
We assume $2n\geq 6$ and take $q\in\mathbb{C}^*$ as a primitive $r$-th root of unity such that $r\geq 3$.
\subsection{Two bases of $\SK$}
Let us consider two types of skein elements of the Kauffman bracket skein module $\SK$ of the $3$-ball with $2n$ marked pints on the boundary. 
They are described by colored trivalent graphs as follows:
\[
%%%%%%%%%%
\tikz[baseline=-.6ex, scale=0.5]{
\draw[rounded corners, thick] (0,-3) rectangle (10,0);
\draw[rounded corners] (1,0) -- (1,-1.5) -- (2,-1.5);
\draw (2,0) -- (2,-1.5);
\draw (3,0) -- (3,-1.5);
\draw (4,0) -- (4,-1.5);
\draw (5,0) -- (5,-1.5);
\draw (6,0) -- (6,-1.5);
\draw (7,0) -- (7,-1.5);
\draw (8,0) -- (8,-1.5);
\draw[rounded corners] (9,0) -- (9,-1.5) -- (8,-1.5);
\draw (2,-1.5) -- (6,-1.5);
\draw (7,-1.5) -- (8,-1.5);
\draw[fill=cyan] (1,0)  circle (.1);
\draw[fill=cyan] (2,0)  circle (.1);
\draw[fill=cyan] (3,0)  circle (.1);
\draw[fill=cyan] (4,0)  circle (.1);
\draw[fill=cyan] (5,0)  circle (.1);
\draw[fill=cyan] (6,0)  circle (.1);
\draw[fill=cyan] (7,0)  circle (.1);
\draw[fill=cyan] (8,0)  circle (.1);
\draw[fill=cyan] (9,0)  circle (.1);
\draw[fill=black] (2,-1.5)  circle (.1);
\draw[fill=black] (3,-1.5)  circle (.1);
\draw[fill=black] (4,-1.5)  circle (.1);
\draw[fill=black] (5,-1.5)  circle (.1);
\draw[fill=black] (6,-1.5)  circle (.1);
\draw[fill=black] (7,-1.5)  circle (.1);
\draw[fill=black] (8,-1.5)  circle (.1);
\node at (1,0) [above]{$\scriptstyle{1}$};
\node at (1,0) [above left]{$\scriptstyle{a_0=}$};
\node at (2,0) [above]{$\scriptstyle{1}$};
\node at (3,0) [above]{$\scriptstyle{1}$};
\node at (4,0) [above]{$\scriptstyle{1}$};
\node at (5,0) [above]{$\scriptstyle{1}$};
\node at (6,0) [above]{$\scriptstyle{1}$};
\node at (7,0) [above]{$\scriptstyle{1}$};
\node at (8,0) [above]{$\scriptstyle{1}$};
\node at (9,0) [above]{$\scriptstyle{1}$};
\node at (9,0) [above right]{$\scriptstyle{=a_{2n-2}}$};
\node at (2.5,-1.5) [below]{$\scriptstyle{a_1}$};
\node at (3.5,-1.5) [below]{$\scriptstyle{a_2}$};
\node at (4.5,-1.5) [below]{$\scriptstyle{a_3}$};
\node at (5.5,-1.5) [below]{$\scriptstyle{a_4}$};
\node at (6.5,-1.5) {$\scriptstyle{\cdots}$};
\node at (7.5,-1.5) [below]{$\scriptstyle{a_{2n-3}}$};
\node at (0,-1.5) [left]{$\beta_T(a_1,a_2,\dots,a_{2n-3})=$};}
%%%%%%%%%%
\]
and
\[
%%%%%%%%%%
\tikz[baseline=-.6ex, scale=0.5]{
\draw[rounded corners, thick] (0,-3) rectangle (10,0);
\draw[rounded corners] (1,0) -- (1,-1.5) -- (2,-1.5);
\draw (2,0) -- (2.5,-.5);
\draw (3,0) -- (2.5,-.5);
\draw (2.5,-1.5) -- (2.5,-.5);
\draw (4,0) -- (4,-1.5);
\draw (5,0) -- (5,-1.5);
\draw (6,0) -- (6,-1.5);
\draw (7,0) -- (7,-1.5);
\draw (8,0) -- (8,-1.5);
\draw[rounded corners] (9,0) -- (9,-1.5) -- (8,-1.5);
\draw (2,-1.5) -- (6,-1.5);
\draw (7,-1.5) -- (8,-1.5);
\draw[fill=cyan] (1,0)  circle (.1);
\draw[fill=cyan] (2,0)  circle (.1);
\draw[fill=cyan] (3,0)  circle (.1);
\draw[fill=cyan] (4,0)  circle (.1);
\draw[fill=cyan] (5,0)  circle (.1);
\draw[fill=cyan] (6,0)  circle (.1);
\draw[fill=cyan] (7,0)  circle (.1);
\draw[fill=cyan] (8,0)  circle (.1);
\draw[fill=cyan] (9,0)  circle (.1);
\draw[fill=black] (2.5,-1.5)  circle (.1);
\draw[fill=black] (2.5,-.5)  circle (.1);
\draw[fill=black] (4,-1.5)  circle (.1);
\draw[fill=black] (5,-1.5)  circle (.1);
\draw[fill=black] (6,-1.5)  circle (.1);
\draw[fill=black] (7,-1.5)  circle (.1);
\draw[fill=black] (8,-1.5)  circle (.1);
\node at (1,0) [above]{$\scriptstyle{1}$};
\node at (1,0) [above left]{$\scriptstyle{a_0=}$};
\node at (2,0) [above]{$\scriptstyle{1}$};
\node at (3,0) [above]{$\scriptstyle{1}$};
\node at (4,0) [above]{$\scriptstyle{1}$};
\node at (5,0) [above]{$\scriptstyle{1}$};
\node at (6,0) [above]{$\scriptstyle{1}$};
\node at (7,0) [above]{$\scriptstyle{1}$};
\node at (8,0) [above]{$\scriptstyle{1}$};
\node at (9,0) [above]{$\scriptstyle{1}$};
\node at (9,0) [above right]{$\scriptstyle{=a_{2n-2}}$};
\node at (2.5,-1) [left]{$\scriptstyle{a_1}$};
\node at (3.5,-1.5) [below]{$\scriptstyle{a_2}$};
\node at (4.5,-1.5) [below]{$\scriptstyle{a_3}$};
\node at (5.5,-1.5) [below]{$\scriptstyle{a_4}$};
\node at (6.5,-1.5) {$\scriptstyle{\cdots}$};
\node at (7.5,-1.5) [below]{$\scriptstyle{a_{2n-3}}$};
\node at (0,-1.5) [left]{$\beta_{T'}(a_1,a_2,\dots,a_{2n-3})=$};}
%%%%%%%%%%
,\]
where $a_i$ is a non-negative integer for $i=1,2,\dots,2n-3$. 
A trivalent vertex 
$\tikz[baseline=-.6ex, scale=0.5]{
\draw (0,0) -- (1,0);
\draw (-1,1) to[out=east, in=north west] (0,0);
\draw (-1,-1) to[out=east, in=south west] (0,0);
\draw[fill=black] (0,0)  circle (.1);
\node at (-1,1) [above]{$\scriptstyle{b}$};
\node at (-1,-1) [above]{$\scriptstyle{a}$};
\node at (.5,0) [above]{$\scriptstyle{c}$};
}$
means
$\tikz[baseline=-.6ex, scale=0.5]{
\draw (-1,.9) -- +(-.5,0);
\draw (-1,-.9) -- +(-.5,0);
\draw (1,0) -- +(.5,0);
\draw (-1,1) to[out=east, in=west] (1,.1);
\draw (-1,-1) to[out=east, in=west] (1,-.1);
\draw (-1,.8) to[out=east, in=east] (-1,.-.8);
\draw[fill=white] (-1.2,.6) rectangle (-1,1.2);
\draw[fill=white] (-1.2,-.6) rectangle (-1,-1.2);
\draw[fill=white] (1,-.3) rectangle (1.2,.3);
\node at (-1.5,.9)[above]{$\scriptstyle{b}$};
\node at (-1.5,-.9)[above]{$\scriptstyle{a}$};
\node at (1.5,0)[below]{$\scriptstyle{c}$};
\node at (-1,0){$\scriptstyle{k}$};
\node at (0,-.9){$\scriptstyle{j}$};
\node at (0,.9){$\scriptstyle{i}$};},
$
where the white box represents the Jones-Wenzl idempotent and  $i=\frac{b+c-a}{2}$, $j=\frac{c+a-b}{2}$, $k=\frac{a+b-c}{2}$. (See, for example, \cite{Lickorish97} and \cite{KauffmanLins94} in detail.)

\begin{DEF}
A triple of non-negative integers $(a,b,c)$ is {\em admissible} if 
\begin{itemize}
\item $a+b+c$ is an even integer,
\item $a+b-c$, $b+c-a$, and $c+a-b$ are non-negative integers.
\end{itemize}
The admissible coloring $(a,b,c)$ is {\em $q$-admissible} if it satisfies
\begin{itemize}
\item $0\leq a,b,c\leq r-2$ and $a+b+c\leq 2(r-2)$.
\end{itemize}
A colored trivalent graph is {\em admissible} (resp. {\em $q$-admissible}) if, for any trivalent vertex, the triple of adjacent edges has an admissible (resp. $q$-admissible) coloring. 
\end{DEF}
We note that $\mathcal{B}_T=\{\,\beta_T(a_1,a_2,\dots,a_{2n-3}) \mid \text{$q$-admissible}\, \}$ and $\mathcal{B}_{T'}=\{\,\beta_{T'}(a_1,a_2,\dots,a_{2n-3}) \mid \text{$q$-admissible}\,\}$ are dual bases of $\SK$ (see, for example, Lickorish~\cite{Lickorish93B}). 
\begin{RMK}
We take a $3$-ball with $2n$-marked points on its boundary in $S^3$. 
The exterior is also a $3$-ball with the same marked points.
We denote Kauffman bracket modules of them by $\mathsf{S}_q(1^{\otimes 2n})$ and $\bar{\mathsf{S}}_q(1^{\otimes 2n})$.
Then, there is a natural bilinear map $\mathsf{S}_q(1^{\otimes 2n})\times\bar{\mathsf{S}}_q(1^{\otimes 2n})\to\mathbb{C}$.
In this paper, 
we treat the dual space $\SK=\mathsf{S}_q(1^{\otimes 2n})/\{\text{the left kernel}\}$ although Masbaum used $\mathsf{S}_q(1^{\otimes 2n})$ in \cite{Masbaum17}.
\end{RMK}

We observe admissible colorings $(1,1,a_1)$ and $(a_1,a_2,1)$. 
It is easy to see that $(1,1,a_1)$ is admissible if and only if $a_1=0,2$, $(0,a_2,1)$ admissible if and only if $a_2=1$, and $(2,a_2,1)$ admissible if and only if $a_2=1,3$. 
Therefore, 
the $q$-admissible colorings of the bases $\mathcal{B}_T$ and $\mathcal{B}_{T'}$ are fallen into the following three types:
\begin{description}
\item[Type~I] $(a_1,a_2,\dots,a_{2n})$ such that $a_1=0$ and $a_2=1$,
\item[Type~{II}] $(a_1,a_2,\dots,a_{2n})$ such that $a_1=2$ and $a_2=1$,
\item[Type~{III}] $(a_1,a_2,\dots,a_{2n})$ such that $a_1=2$ and $a_2=3$.
\end{description} 
In what follows, we only treat $r\geq 4$ because $q$-admissible colorings of Type~{II} and Type~{III} are empty if $r< 4$.
Then the number of Type~{I} colorings is equal to one of Type~{II}.
Let $k$ and $k'$ be the number of Type~{I} $q$-admissible colorings and Type~{III} $q$-admissible colorings respectively. 
We remark that $k'=0$ if $r=4$.

Let us consider the change of coordinates matrix from $\mathcal{B}_{T}$ to $\mathcal{B}_{T'}$. 
We set up the order of $\mathcal{B}_{T}$ and $\mathcal{B}_{T'}$ as the lexicographic order of colorings $(a_1,a_2,\dots,a_{2n-3})$.
$\mathcal{B}_T$ transforms into $\mathcal{B}_{T'}$ by quantum $6j$-symbols, 
\begin{align*}
\beta_{T}(0,1,a_3,\dots,a_{2n-3})
&=
\begin{Bmatrix}
1&1&0\\
1&1&0
\end{Bmatrix}
\beta_{T'}(0,1,a_3,\dots,a_{2n-3})
+
\begin{Bmatrix}
1&1&2\\
1&1&0
\end{Bmatrix}
\beta_{T'}(2,0,a_3,\dots,a_{2n-3})\\
\beta_{T}(2,1,a_3,\dots,a_{2n-3})
&=\begin{Bmatrix}
1&1&2\\
1&1&2
\end{Bmatrix}
\beta_{T'}(2,1,a_3,\dots,a_{2n-3})
+
\begin{Bmatrix}
1&1&0\\
1&1&2
\end{Bmatrix}
\beta_{T'}(0,1,a_3,\dots,a_{2n-3})\\
\beta_{T}(2,3,a_3,\dots,a_{2n-3})
&=
\begin{Bmatrix}
1&1&2\\
1&3&2
\end{Bmatrix}
\beta_{T'}(2,3,a_3,\dots,a_{2n-3}).
\end{align*} 
The value of the above $6j$-symbols are the following:
\begin{align*}
&\begin{Bmatrix}
1&1&0\\
1&1&0
\end{Bmatrix}
=-\frac{1}{\left[2\right]}, 
&\begin{Bmatrix}
1&1&2\\
1&1&0
\end{Bmatrix}
=1,\\
&\begin{Bmatrix}
1&1&2\\
1&1&2
\end{Bmatrix}
=\frac{1}{\left[2\right]},
&\begin{Bmatrix}
1&1&0\\
1&1&2
\end{Bmatrix}
=\frac{\left[3\right]}{\left[2\right]^2},\\ 
&\begin{Bmatrix}
1&1&0\\
1&1&0
\end{Bmatrix}
=1.
\end{align*}
We rewrite the transformations:
\begin{align*}
\beta_{T}(0,1,a_3,\dots,a_{2n-3})
&=
-\left[2\right]^{-1}\beta_{T'}(0,1,a_3,\dots,a_{n-3})+\beta_{T'}(2,0,a_3,\dots,a_{2n-3})\\
\beta_{T}(2,1,a_3,\dots,a_{2n-3})
&=\left[2\right]^{-1}\beta_{T'}(2,1,a_3,\dots,a_{2n-3})+\left[3\right]\left[2\right]^{-2}\beta_{T'}(0,1,a_3,\dots,a_{2n-3})\\
\beta_{T}(2,3,a_3,\dots,a_{2n-3})
&=\beta_{T'}(2,3,a_3,\dots,a_{2n-3}).
\end{align*} 
It follows from the order of the $q$-admissible colorings that the first $k$ vectors in $\mathcal{B}_{T}$ and $\mathcal{B}_{T'}$ are Type~{I}, the next $k$ vectors are Type~{II} and the $k'$ vectors after the next are Type~{III}. 
Consequently, 
we obtain the following the change of coordinates matrix:
\[
 A=
\begin{bmatrix}
-\left[2\right]^{-1}I_k & \left[3\right]\left[2\right]^{-2}I_k & O\\
I_k & \left[2\right]^{-1}I_k & O\\
O & O & I_{k'}
\end{bmatrix}.
\]
We can also see $A^{-1}=A$ in a straightforward way.

\subsection{Calculation of $\rho$}
There is a natural action of $B_{2n}$ on $\SK$. 
For any standard generator $\sigma_i\in B_{2n}$ ($i=1,2,\dots,2n-1$), the action is given by gluing $\sigma_i^{-1}$ on the top of a skein element in $\SK$. 
It is well-known that $\mathcal{M}(0,2n)$ is a quotient of $B_{2n}$ by the following relators~\cite{Birman74}:
\begin{align*}
R_1(2n)&=\sigma_1\sigma_2\dots\sigma_{2n-1}\sigma_{2n-1}\sigma_{2n-2}\dots\sigma_1\\
R_2(2n)&=(\sigma_1\sigma_2\dots\sigma_{2n-1})^{2n}.
\end{align*}
We remark that the quotient map projects $\sigma_i\in B_{2n}$ onto a half-twist permuting the $i$-th and the $(i+1)$-th punctures.
The action of $R_1(2n)$ and $R_2(2n)$ on $\SK$ is trivial up to scalar multiplication, see~\cite{Masbaum17}.
Hence, we obtain a projective representation $\rho\colon \mathcal{M}(0,2n)\to PGL(\SK)$.
The action of $B_{2n}$ on $\SK$ factor through $\rho$. 
We denote this representation $B_{2n}\to PGL(\SK)$ by the same symbol $\rho$.

Firstly, 
we compute $\rho(\sigma_1^m)$ by using a well-known formula:
\begin{equation}\label{twist}
\tikz[baseline=-.6ex]{
\draw (-1.5,0) -- (-1,0);
\draw (-1,0) to[out=north east, in=west] (-.5,.4);
\draw (-1,0) to[out=south east, in=west] (-.5,-.4);
\draw[white, double=black, double distance=0.4pt, ultra thick] 
(-.5,.4) to[out=east, in=west] (.0,-.4);
\draw[white, double=black, double distance=0.4pt, ultra thick] 
(-.5,-.4) to[out=east, in=west] (.0,.4);
\draw[fill=black] (-1,0) circle (.05);
\node at (-.5,-.5) [left]{$\scriptstyle{c}$};
\node at (-.5,.5) [left]{$\scriptstyle{b}$};
\node at (-1.5,0) [above]{$\scriptstyle{a}$};
}\,
=
(-1)^\frac{a-b-c}{2}q^{-\frac{1}{8}(a(a+2)-b(b+2)-c(c+2))}
\tikz[baseline=-.6ex]{
\draw (-1.5,0) -- (-1,0);
\draw (-1,0) to[out=north east, in=west] (-.5,.4);
\draw (-1,0) to[out=south east, in=west] (-.5,-.4);
\draw[fill=black] (-1,0) circle (.05);
\node at (-.5,-.5) [left]{$\scriptstyle{b}$};
\node at (-.5,.5) [left]{$\scriptstyle{c}$};
\node at (-1.5,0) [above]{$\scriptstyle{a}$};
}\ .
\end{equation}
For a positive integer $m$, the action of $\rho(\sigma_1^m)$ is obtained by gluing a left-handed $m$ half-twists of two strands on the first and second strands of an element of $\SK$.
\[
\sigma_1^m= 
\tikz[baseline=-.6ex, scale=0.5, yscale=0.5]{
\begin{scope}[yshift=2cm]
\draw[white, double=black, double distance=0.4pt, ultra thick] 
(-.5,-1) to[out=north, in=south] (.5,1);
\draw[white, double=black, double distance=0.4pt, ultra thick] 
(.5,-1) to[out=north, in=south] (-.5,1);
\end{scope}
\node at (0,-.5) {$\cdot$};
\node at (0,0) {$\cdot$};
\node at (0,.5) {$\cdot$};
\node at (.5,0) [right]{{\scriptsize $m$ half-twists}};
\begin{scope}[yshift=-2cm]
\draw[white, double=black, double distance=0.4pt, ultra thick] 
(-.5,-1) to[out=north, in=south] (.5,1);
\draw[white, double=black, double distance=0.4pt, ultra thick] 
(.5,-1) to[out=north, in=south] (-.5,1);
\node at (-.5,-1) [below]{$\scriptstyle{1}$};
\node at (.5,-1) [below]{$\scriptstyle{1}$};
\end{scope}
}.\]
It is easy to see that the action of $\sigma_1^m$ on the basis $\beta_T$ is
\begin{align*}
\rho(\sigma_1^m)\beta_T(0,a_1,\dots,a_{2n-3})&=(-1)^mq^{\frac{3m}{4}}\beta_T(0,a_1,\dots,a_{2n-3}),\\
\rho(\sigma_1^m)\beta_T(2,a_1,\dots,a_{2n-3})&=q^{-\frac{m}{4}}\beta_T(2,a_1,\dots,a_{2n-3}).
\end{align*}
Therefore, $\rho(\sigma_1^m)$ is the identity in $PGL(\SK)$ if $(-1)^mq^{\frac{3m}{4}}=q^{-\frac{m}{4}}$.
Of course, 
this calculation coincides with Masbaum's calculation in \cite{Masbaum17}.
\begin{PROP}[Masbaum~{\cite[Prop.~3.1]{Masbaum17}}]\label{Masbaumprop}
If $q\in\mathbb{C}$ satisfies $q^m=(-1)^m$, 
then $\rho(\sigma_i^m)$ is the identity in $PGL(\SK)$.
\end{PROP}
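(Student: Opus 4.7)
The plan is to read the proposition almost directly off the calculation of the two eigenvalues of $\rho(\sigma_1^m)$ that is carried out immediately above the statement, and then to extend from $i=1$ to arbitrary $i$ by conjugacy in $B_{2n}$.

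First, I would note that the twist formula \eqref{twist}, applied $m$ times at the trivalent vertex $(1,1,a_1)$ sitting at the leftmost end of the trivalent graph defining $\beta_T$, shows that $\rho(\sigma_1^m)$ acts diagonally on $\mathcal{B}_T$. The only admissible values of $a_1$ at this vertex are $0$ and $2$, and these correspond precisely to the three types of colorings introduced earlier: Type~I basis vectors are scaled by $(-1)^m q^{3m/4}$, while Type~II and Type~III basis vectors (both having $a_1=2$) are scaled by $q^{-m/4}$. A diagonal matrix descends to the identity in $PGL(\SK)$ if and only if all of its diagonal entries coincide, so I would then check
\[
(-1)^m q^{3m/4} = q^{-m/4} \iff (-1)^m q^{m} = 1 \iff q^{m} = (-1)^{m},
\]
which is exactly the hypothesis. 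This proves the proposition for $i=1$.

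To pass from $i=1$ to an arbitrary $\sigma_i$, I would invoke the classical fact that the standard generators of $B_{2n}$ are all mutually conjugate: explicitly, setting $\alpha_i = \sigma_{i-1}\sigma_{i-2}\cdots\sigma_1$ one has $\sigma_i = \alpha_i\,\sigma_1\,\alpha_i^{-1}$. Because $\rho$ is a projective representation of $B_{2n}$, it follows that
\[
\rho(\sigma_i^{m}) = \rho(\alpha_i)\,\rho(\sigma_1^{m})\,\rho(\alpha_i)^{-1}
\]
in $PGL(\SK)$, and conjugation by any element preserves the identity, so $\rho(\sigma_i^{m}) = \mathrm{id}$ whenever $\rho(\sigma_1^{m}) = \mathrm{id}$.

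There is no real obstacle here; the main thing to make sure of is that the ``only two eigenvalues'' statement really does cover all $q$-admissible colorings and, in particular, that the Type~III colorings (with $a_1=2$, $a_2=3$) contribute the same eigenvalue as Type~II, which is immediate since the twist eigenvalue of \eqref{twist} at a $(1,1,a_1)$ vertex depends only on $a_1$.
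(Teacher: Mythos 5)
Your argument is exactly the paper's: the twist formula \eqref{twist} gives eigenvalue $(-1)^m q^{3m/4}$ on the $a_1=0$ colorings and $q^{-m/4}$ on the $a_1=2$ colorings (Types~II and III alike), these coincide precisely when $q^m=(-1)^m$, and that is all that is needed for triviality in $PGL(\SK)$; the reduction of general $i$ to $i=1$ by conjugacy of the standard generators is the same (implicit) step the paper relies on. One small correction: your explicit conjugator $\alpha_i=\sigma_{i-1}\cdots\sigma_1$ does not satisfy $\alpha_i\sigma_1\alpha_i^{-1}=\sigma_i$ (already for $i=2$ it returns $\sigma_1$, and for $i\geq 3$ the left side lies in the subgroup generated by $\sigma_1,\dots,\sigma_{i-1}$); use instead, e.g., $\delta^{i-1}$ with $\delta=\sigma_1\sigma_2\cdots\sigma_{2n-1}$, for which $\delta\sigma_j\delta^{-1}=\sigma_{j+1}$.
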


Masbaum proved that there exists a primitive root of unity $q$ satisfying 
\begin{itemize}
\item $\rho(h_i^m)=0$,
\item $\rho(h_1^2h_2^{-2})$ has infinite order in $PGL(\SK)$,
\end{itemize}
for each $2n\geq 4$ and $m\geq 6$. 
It derives the following:
\begin{THM}[Masbaum~{\cite{Masbaum17}}]\label{Masbaum}
For $2n\geq 4$ and $m\geq 6$, $N_m$ has infinite index in $\mathcal{M}(0,2n)$.
\end{THM}
To be more accurate, he did not use the dual space $\SK$ but the Kauffman bracket skein module of the $3$-ball. 
Particularly, he calculated a two-dimensional subspace spanned by tangles ended with four marked points. 
In $\SK$, such subspace are one-dimensional.

We give a brief review of calculations in the proof of Theorem~\ref{Masbaum} by using $\SK$.
\subsection{Calculation of the matrix representation $\rho_T$}
Masbaum showed that $\rho(\sigma_1^2\sigma_2^{-2})$ has infinite order in $PGL(\SK)$ for some primitive root of unity $q$ in \cite{Masbaum17}. 
He actually calculated a matrix representation of $\rho(\sigma_1^2\sigma_2^{-2})$.
Let us follow his argument through a matrix representation $\rho_T$ of $\rho$ with respect to the basis $\mathcal{B}_{T}$ for $r\geq 5$. 
In the same way to the calculation of $\rho(\sigma_i^m)$, 
we can show the following:
\begin{align*}
\rho(\sigma_1)\beta_{T}(0,1,a_3,\dots,a_{2n-3})&=-q^{\frac{3}{4}}\beta_{T}(0,1,a_3,\dots,a_{2n-3}),\\
\rho(\sigma_1)\beta_{T}(2,a_2,a_3,\dots,a_{2n-3})&=q^{-\frac{1}{4}}\beta_{T}(2,a_2,a_3,\dots,a_{2n-3}),\\
\rho(\sigma_2^{-1})\beta_{T'}(0,1,a_3,\dots,a_{2n-3})&=-q^{-\frac{3}{4}}\beta_{T'}(0,1,a_3,\dots,a_{2n-3}),\\
\rho(\sigma_2^{-1})\beta_{T'}(2,a_2,a_3,\dots,a_{2n-3})&=q^{\frac{1}{4}}\beta_{T'}(2,a_2,a_3,\dots,a_{2n-3}).
\end{align*}
Therefore, 
the matrix representation $\rho_T(\sigma_1^s\sigma_2^{-s})$ is
\[
\rho_T(\sigma_1^s\sigma_2^{-s})=
\begin{bmatrix}
(-1)^sq^{\frac{3s}{4}}I_k & O & O\\
O & q^{-\frac{s}{4}}I_k & O\\
O & O & q^{-\frac{s}{4}}I_{k'}
\end{bmatrix}
A
\begin{bmatrix}
(-1)^sq^{-\frac{3s}{4}}I_k & O & O\\
O & q^{\frac{s}{4}}I_k & O\\
O & O & q^{\frac{s}{4}}I_{k'}
\end{bmatrix}
A.
\]
By a straightforward calculation, 
\[
\rho_T(\sigma_1^s\sigma_2^{-s})=
\begin{bmatrix}
(1+(-1)^sq^s\left[3\right])\left[2\right]^{-2}I_k & (-1+(-1)^sq^s)\left[3\right]\left[2\right]^{-3}I_k & O\\
(1-(-1)^sq^{-s})\left[2\right]^{-1}I_k & (1+(-1)^sq^{-s}\left[3\right])\left[2\right]^{-2}I_k & O\\
O & O & I_{k'}
\end{bmatrix},
\]
and $\det \rho_T(\sigma_1^s\sigma_2^{-s})=1$. 
The trace of $\rho_T(\sigma_1^s\sigma_2^{-s})$ can also be computed as $\operatorname{tr}\rho_T(\sigma_1^s\sigma_2^{-s})=f_{s}(q)k+k'$ where
\[
f_s(q)=(-1)^s(q^s+q^{-s})+(-1)^{s+1}\left(\frac{q^{\frac{s}{2}}+(-1)^{s+1}q^{-\frac{s}{2}}}{q^{\frac{1}{2}}+q^{-\frac{1}{2}}}\right)^2.
\]
We apply the argument of \cite{Masbaum99}.
If $\rho_T(\sigma_1^s\sigma_2^{-s})$ has finite order in $PGL_{2k+k'}(\mathbb{C})$, 
then $\left|\operatorname{tr}\rho_T(\sigma_1^s\sigma_2^{-s})\right|\leq 2k+k'$, especially $f_s(q)\leq 2$, for any positive integer $s$ and root of unity $q\in\mathbb{C}$. 
We can derive a contradiction by choosing a primitive $r$-th root of unity such that $f_2(q)>2$ for $r>4$ and $r\not\in\{6,10\}$. 
Thus, we obtained weak version of Lemma~3.4 in \cite{Masbaum17}
\begin{LEM}[Masbaum~\cite{Masbaum17}]\label{Masbaumlem}
Let $\rho_T\colon \mathcal{M}(0,2n)\to PGL_{2k+k}(\mathbb{C})$ be the above representation for $2n\geq 6$. 
Suppose $r\geq 5$ and $r\not\in\{6,10\}$.
Then, 
there exists a primitive $r$-th root of unity $q$ such that $\rho_T(\sigma_1^2\sigma_2^{-2})$ has infinite order in $PGL_{2k+k'}(\mathbb{C})$.
\end{LEM}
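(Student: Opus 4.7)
The plan is to proceed by contradiction, following the trace-bound strategy of \cite{Masbaum99} together with the explicit block-matrix calculations already set up in this section. The inputs I would carry over are: the block form of $\rho_T(\sigma_1^s\sigma_2^{-s})$ in the basis $\mathcal{B}_T$, the identity $\det \rho_T(\sigma_1^s\sigma_2^{-s}) = 1$, and the trace formula $\operatorname{tr}\rho_T(\sigma_1^s\sigma_2^{-s}) = f_s(q)k + k'$ with $f_s$ as displayed.

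The key general fact to invoke is: if $M\in GL_N(\mathbb{C})$ satisfies $\det M = 1$ and has finite order in $PGL_N(\mathbb{C})$, then $M^t = \lambda I$ for some scalar $\lambda$ and integer $t$, which forces $\lambda^N = 1$; hence the eigenvalues of $M$ all lie on the unit circle and $|\operatorname{tr} M|\leq N$. Applied to $M=\rho_T(\sigma_1^2\sigma_2^{-2})$ and $N=2k+k'$, finiteness of the order in $PGL_{2k+k'}(\mathbb{C})$ would yield
\[
|f_2(q)k + k'|\leq 2k+k'
\]
for every primitive $r$-th root of unity $q$. So it suffices to exhibit, for each admissible $r$, a primitive $r$-th root of unity $q$ with $f_2(q) > 2$ (a real value strictly greater than $2$ breaks the bound because $k\geq 1$ for $2n\geq 6$).

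The next step is to simplify $f_2$. Using $(q^{1/2}-q^{-1/2})^2 = q - 2 + q^{-1}$ in the definition of $f_s$ and writing $q = e^{i\theta}$, a short computation gives
\[
f_2(q) = 2\cos(2\theta) - 2\cos\theta + 2.
\]
Hence $f_2(q)>2$ is equivalent to $\cos(2\theta) > \cos\theta$, and the double angle identity factors this as $(2\cos\theta + 1)(\cos\theta - 1) > 0$, i.e.\ $\cos\theta < -1/2$, i.e.\ $\theta/(2\pi) \in (1/3,2/3) \pmod 1$.

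Finally I would verify the arithmetic statement: for every $r\geq 5$ with $r\notin\{6,10\}$ there exists $\ell$ with $\gcd(\ell,r)=1$ and $\ell/r\in(1/3,2/3)$, so that $q = e^{2\pi i \ell/r}$ does the job (e.g.\ $\ell=2$ for $r=5$; $\ell\in\{3,4\}$ for $r=7$; for all large $r$ the middle third contains plenty of residues coprime to $r$). The same check also confirms that $r=6$ (only coprime residues $1,5$) and $r=10$ (only coprime residues $1,3,7,9$) are genuine obstructions, no coprime class lying in $(1/3,2/3)$. The main obstacle in the proof is exactly this last combinatorial check: the inequality $f_2(q)>2$ is forced on a primitive root of unity except precisely when $r\in\{6,10\}$, which is why these two values must be excluded in the hypothesis. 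Once this is in place the contradiction is immediate and the lemma follows.
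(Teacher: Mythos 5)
Your proof is correct and follows essentially the same route as the paper: the trace bound $|\operatorname{tr}|\leq 2k+k'$ for finite-order elements of $PGL_{2k+k'}(\mathbb{C})$ with determinant $1$, reduced to finding a primitive $r$-th root of unity with $f_2(q)>2$. You additionally make explicit the simplification $f_2(e^{i\theta})=2\cos(2\theta)-2\cos\theta+2$ and the resulting condition $\cos\theta<-1/2$, which the paper leaves implicit, but the argument is the same.
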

We remark that he also showed $\rho(\sigma_1^2\sigma_2^{-2})$ has infinite order for $r=3$. 
In this case, 
the matrix representation he used is non-trivial and all of its eigenvalues are $1$.

For $m\geq 6$, 
we can take a primitive root of unity $q$ such that $\rho_T(\sigma_1^2\sigma_2^{-2})$ has infinite order and $\rho(\sigma_i^m)$ is identity. 
It is a proof of Theorem~\ref{Masbaum} for $2n\geq 6$ and $m\geq 6$.

We remark that his argument works out for $s=1$. 
However, 
we can not prove the infiniteness of the order of $\rho_T(\sigma_1^s\sigma_2^{-s})$ for any other primitive root of unity $q$ whatever integer $s$ we choose. 
Hence, 
we need to consider another element in order to show the remaining case $2n\geq 6$ and $m=5$.

Stylianakis~\cite{Stylianakis17} also showed a similar theorem by representation theory using the Jones representation~\cite{Jones87}.
\begin{THM}[Stylianakis~{\cite{Stylianakis17}}]
For $2n\geq 6$ and $m\geq 5$, $N_m$ has infinite index in $\mathcal{M}(0,2n)$.
\end{THM}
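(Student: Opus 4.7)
The plan is to execute Masbaum's strategy in full: for the target pair $(m,2n)$, exhibit a primitive $r$-th root of unity $q$ and an element $w\in B_{2n}$ such that (i) $q^m=(-1)^m$, so that Proposition~\ref{Masbaumprop} gives $\rho(h_i^m)=1$ in $PGL(\SK)$, and (ii) $\rho_T(w)$ has infinite order in $PGL(\SK)$. Together these produce an infinite cyclic quotient of $\mathcal{M}(0,2n)/N_m$, proving infinite index.

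For $m\geq 7$ the equation $q^m=(-1)^m$ is always satisfiable by some $r\geq 5$ with $r\notin\{6,10\}$, so Lemma~\ref{Masbaumlem} applies directly with $w=\sigma_1^2\sigma_2^{-2}$. The substantive new case is $m=5$, where $q^5=-1$ forces $r=10$, precisely the value excluded from Lemma~\ref{Masbaumlem}: at $r=10$ the inequality $f_2(q)\leq 2$ makes the trace test on $\sigma_1^2\sigma_2^{-2}$ inconclusive. The theorem excludes $m=6$ for the analogous reason, namely that $q^6=1$ forces $r\in\{3,6\}$, each of which demands a case-specific treatment rather than the uniform one used here. So at $r=10$ one must replace $\sigma_1^2\sigma_2^{-2}$ by a different element.

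My search for this replacement $w$ would start from natural enlargements of Masbaum's element: asymmetric powers $\sigma_1^a\sigma_2^{-b}$ with $a\neq b$, words involving $\sigma_3$ (whose matrix in $\mathcal{B}_T$ is recovered by conjugating its diagonal action on a third ``trivalent'' basis $\mathcal{B}_{T''}$ by a $6j$-style change-of-coordinates matrix analogous to $A$), or combined words such as $\sigma_1^a\sigma_2^{-b}\sigma_3^c$. The block-triangular form of $A$ lets one compute $\rho_T(w^s)$ in terms of $2\times 2$ blocks scaled by $I_k$ together with the scalar piece on $I_{k'}$, and the trace takes the schematic form $\operatorname{tr}\rho_T(w^s)=g_s(q)\,k+h_s(q)\,k'$ for explicit algebraic $g_s,h_s$. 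By the argument of \cite{Masbaum99}, a single $s$ for which $|g_s(q)k+h_s(q)k'|>2k+k'$ at a primitive $10$th root of unity already forces $\rho(w)$ to have infinite order in $PGL(\SK)$, giving the conclusion.

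The main obstacle is the arithmetic verification at $r=10$: the coefficients live in a small cyclotomic field whose real subfield is $\mathbb{Q}(\sqrt{5})$, so the trace inequality has to be certified by explicit manipulation of $[2]=2\cos(\pi/10)$ and $[3]=(3+\sqrt{5})/2$ rather than by any generic estimate in $q$. If no single element $w$ beats the bound $2k+k'$ through its trace, one falls back on a finer test, such as analysing the characteristic polynomial of $\rho_T(w)$ and exhibiting two eigenvalues whose ratio is not a root of unity at $r=10$; this still certifies infinite order in $PGL(\SK)$ and closes the remaining case.
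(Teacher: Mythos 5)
Your framing of the strategy is the same as the paper's, and you correctly isolate the crux: $m=5$ forces $q^5=-1$, hence $r=10$, exactly the value where Masbaum's trace test on $\sigma_1^2\sigma_2^{-2}$ is inconclusive. But the proposal stops where the actual work begins: you never produce the replacement element $w$, and the first family you propose to search is provably empty of solutions. The paper records that $\rho_T(\sigma_1^s\sigma_2^{-s})$ cannot be shown to have infinite order for any $s$ at the remaining roots of unity, and in fact \emph{no} word in $\sigma_1,\sigma_2$ alone can work at $r=10$: on each $2\times 2$ block the two generators act as matrices with eigenvalue ratio $-q$, and the projective group they generate is finite precisely when $r\in\{1,2,3,4,6,10\}$ (at $r=10$ it is the icosahedral group). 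This is the classical exceptional finite image of the Jones/Temperley--Lieb representation on three strands, and it is the reason Stylianakis's theorem requires $2n\geq 6$ while Masbaum's requires only $2n\geq 4$. So ``asymmetric powers $\sigma_1^a\sigma_2^{-b}$'' are doomed, and for the remaining candidates ($\sigma_1^a\sigma_2^{-b}\sigma_3^c$, etc.) you supply neither a concrete element nor the arithmetic at $r=10$; a promise to search, plus a fallback to eigenvalue-ratio analysis, is not a proof. The paper's resolution is structurally different from anything in your list: it takes $w=(\sigma_1\cdots\sigma_{n-1})^{n}\sigma_n(\sigma_1\cdots\sigma_{n-1})^{-n}\sigma_n^{-1}$, the commutator of the half full twist on the first $n$ strands with $\sigma_n$. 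The full twist is diagonal on $\mathcal{B}_T$ with eigenvalue depending on $a_{n-1}$, $\sigma_n$ is diagonal on a new basis $\mathcal{B}_Y$ adapted to the middle edge, and the $6j$ change of basis splits $M=\rho_T(w)$ into $2\times 2$ blocks $M(a)$ with
\[
\operatorname{tr}M(a)=\Bigl(2+\bigl(q^{\frac{a+1}{2}}-q^{-\frac{a+1}{2}}\bigr)\bigl(q^{\frac{a+3}{2}}-q^{-\frac{a+3}{2}}\bigr)\Bigr)l_2(a),
\]
which exceeds $2l_2(a)$ for $a=1$ at $q=\exp(3\pi i/5)$. Your schematic $\operatorname{tr}\rho_T(w^s)=g_s(q)k+h_s(q)k'$ is tied to the two-block structure of the matrix $A$ for the first three strands and does not describe this computation.

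A second gap: the statement you are proving is Stylianakis's, which includes $m=6$, yet your argument explicitly sets $m=6$ aside on the grounds that $q^6=1$ forces $r\in\{3,6\}$. That case is not reached by any trace inequality; it is covered by Masbaum's Theorem~\ref{Masbaum} via a separate $r=3$ argument in which the relevant matrix is non-trivial with all eigenvalues equal to $1$. You must either invoke that result for $m=6$ or reproduce the $r=3$ argument; as written, your proof covers only $m\geq 5$ with $m\neq 6$.
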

Masbaum commented that ``{\em I believe that the remaining case ($2n\geq 6$, $m=5$) of Stylianakis's theorem can also be proved using the skein-theoretic method}''. 
He also expected that the proof of the remaining case probably requires some mathematical software.

In the next section, 
we will prove the remaining case by hand calculations.
\section{The remaining case}\label{remainder}
In this section, 
we use the basis $\mathcal{B}_T$ and a new one $\mathcal{B}_Y$. 
Let $q$ be a primitive $r$-th root of unity with $r\geq 3$.
$\mathcal{B}_Y$ consists of the following trivalent graph such that a triple of colorings at each trivalent vertex is $q$-admissible.

\[
%%%%%%%%%%
\tikz[baseline=-.6ex, scale=0.5]{
\draw[rounded corners] (0,-3) rectangle (12,0);
\draw[rounded corners] (1,0) -- (1,-1.5) -- (2,-1.5);
\draw (2,0) -- (2,-1.5);
\draw (3,0) -- (3,-1.5);
\draw (4,0) -- (4,-1.5);
\draw (5,0) -- (5,-1.5);
%\draw (6,0) -- (6,-1.5);
%\draw (7,0) -- (7,-1.5);
\draw (8,0) -- (8,-1.5);
\draw (9,0) -- (9,-1.5);
\draw (10,0) -- (10,-1.5);
\draw[rounded corners] (11,0) -- (11,-1.5) -- (10,-1.5);
\draw (6,0) -- (6.5,-.5);
\draw (7,0) -- (6.5,-.5);
\draw (6.5,-1.5) -- (6.5,-.5);
\draw (2,-1.5) -- (4,-1.5);
\draw (5,-1.5) -- (8,-1.5);
\draw (9,-1.5) -- (10,-1.5);
\node at (4.5,-1.5) {$\scriptstyle{\cdots}$};
\node at (8.5,-1.5) {$\scriptstyle{\cdots}$};
\draw[fill=cyan] (1,0)  circle (.1);
\draw[fill=cyan] (2,0)  circle (.1);
\draw[fill=cyan] (3,0)  circle (.1);
\draw[fill=cyan] (4,0)  circle (.1);
\draw[fill=cyan] (5,0)  circle (.1);
\draw[fill=cyan] (6,0)  circle (.1);
\draw[fill=cyan] (7,0)  circle (.1);
\draw[fill=cyan] (8,0)  circle (.1);
\draw[fill=cyan] (9,0)  circle (.1);
\draw[fill=cyan] (10,0)  circle (.1);
\draw[fill=cyan] (11,0)  circle (.1);
\draw[fill=black] (2,-1.5)  circle (.1);
\draw[fill=black] (3,-1.5)  circle (.1);
\draw[fill=black] (4,-1.5)  circle (.1);
\draw[fill=black] (5,-1.5)  circle (.1);
\draw[fill=black] (6.5,-1.5)  circle (.1);
\draw[fill=black] (6.5,-.5)  circle (.1);
\draw[fill=black] (8,-1.5)  circle (.1);
\draw[fill=black] (9,-1.5)  circle (.1);
\draw[fill=black] (10,-1.5)  circle (.1);
\node at (1,0) [above]{$\scriptstyle{1}$};
\node at (1,0) [above left]{$\scriptstyle{a_0=}$};
\node at (2,0) [above]{$\scriptstyle{1}$};
\node at (3,0) [above]{$\scriptstyle{1}$};
\node at (4,0) [above]{$\scriptstyle{1}$};
\node at (5,0) [above]{$\scriptstyle{1}$};
\node at (6,0) [above]{$\scriptstyle{1}$};
\node at (7,0) [above]{$\scriptstyle{1}$};
\node at (8,0) [above]{$\scriptstyle{1}$};
\node at (9,0) [above]{$\scriptstyle{1}$};
\node at (10,0) [above]{$\scriptstyle{1}$};
\node at (11,0) [above]{$\scriptstyle{1}$};
\node at (11,0) [above right]{$\scriptstyle{=a_{n-2}}$};
\node at (2.5,-1.5) [below]{$\scriptstyle{a_1}$};
\node at (3.5,-1.5) [below]{$\scriptstyle{a_2}$};
%\node at (4.5,-1.5) [below]{$\scriptstyle{a_3}$};
\node at (6,-1.5) [below]{$\scriptstyle{a_{n-2}}$};
\node at (6.5,-1) [right]{$\scriptstyle{a_{n-1}}$};
\node at (7.5,-1.5) [below]{$\scriptstyle{a_{n}}$};
\node at (9.5,-1.5) [below]{$\scriptstyle{a_{2n-3}}$};
\node at (0,-1.5) [left]{$\beta_{Y}(a_1,a_2,\dots,a_{2n-3})=$};}
%%%%%%%%%%
\]
From the condition of admissible colorings, 
we know that $a_{n-1}$ is $0$ or $2$ for $\beta_Y$.
We define an order on $\mathcal{B}_T$ and $\mathcal{B}_Y$ to give the explicit representation matrix of $\rho$. 
Let us classify the elements of $\mathcal{B}_T$ into four disjoint sets:
\begin{description}
\item[$\operatorname{I}_0(a)$] $\beta_T(a_1,a_2,\dots,a_{2n-3})$ such that $a=a_{n-1}=a_{n-2}+1=a_n-1$,
\item[$\operatorname{I}_2(a)$] $\beta_T(a_1,a_2,\dots,a_{2n-3})$ such that $a=a_{n-1}=a_{n-2}-1=a_n+1$, 
\item[$\operatorname{II}_0(a)$] $\beta_T(a_1,a_2,\dots,a_{2n-3})$ such that $a=a_{n-1}=a_{n-2}+1=a_n+1$, 
\item[$\operatorname{II}_2(a)$] $\beta_T(a_1,a_2,\dots,a_{2n-3})$ such that $a=a_{n-1}=a_{n-2}-1=a_n-1$.
\end{description}

We can describe admissible colorings of $\mathcal{B}_T$ on the lattice by the following manner.
If $\beta_T(a_1,a_{2},\dots,a_{2n-3})$ is admissible, then we mark $(i,a_i)$ with a dot and connect $(i,a_i)$ and $(i+1,a_{i+1})$ by an edge for $i=1,\dots,a_{2n-3}$. 
The elements of $B_{T}$ satisfying $q$-admissibility lie in $\{(x,y)\in\mathbb{Z}_{\geq 0}\times\mathbb{Z}_{\geq 0}\mid y\leq r-2\}$. (See Figure~\ref{latticeT}.)

\begin{figure}
\centering
\begin{tikzpicture}[scale=.5]
\foreach \i in {0,...,4}{
\draw[lightgray] (\i,0) -- (\i,3);};
\foreach \j in {0,...,3}{
\draw[lightgray] (0,\j) -- (4,\j);};
\foreach \j in {0,...,3}{
\path (-.5,\j) node{{\scriptsize \j}};};
\foreach \j in {0,...,4}{
\path (\j,-.5) node{{\scriptsize $a_{\j}$}};};
\draw (0,1) -- (2,3);
\draw (1,0) -- (3,2);
\draw (3,0) -- (4,1);
\draw (4,1) -- (2,3);
\draw (3,0) -- (1,2);
\draw (1,0) -- (0,1);
\fill (0,1) circle(3pt);
\fill (1,0) circle(3pt);
\fill (1,2) circle(3pt);
\fill (2,1) circle(3pt);
\fill (2,3) circle(3pt);
\fill (3,0) circle(3pt);
\fill (3,2) circle(3pt);
\fill (4,1) circle(3pt);
\node[magenta] at (4,1)[right]{{\scriptsize $r=3$}};
\node[magenta] at (4,2)[right]{{\scriptsize $r=4$}};
\node[magenta] at (4,3)[right]{{\scriptsize $r\geq 5$}};
\end{tikzpicture}
\begin{tikzpicture}[scale=.5]
\foreach \i in {0,...,6}{
\draw[lightgray] (\i,0) -- (\i,4);};
\foreach \j in {0,...,4}{
\draw[lightgray] (0,\j) -- (6,\j);};
\foreach \j in {0,...,4}{
\path (-.5,\j) node{{\scriptsize \j}};};
\foreach \i in {0,...,6}{
\path (\i,-.5) node{{\scriptsize $a_{\i}$}};};
\draw (0,1) -- (3,4);
\draw (1,0) -- (4,3);
\draw (3,0) -- (5,2);
\draw (5,0) -- (6,1);
\draw (6,1) -- (3,4);
\draw (5,0) -- (2,3);
\draw (3,0) -- (1,2);
\draw (1,0) -- (0,1);
\fill (0,1) circle(3pt);
\fill (1,0) circle(3pt);
\fill (1,2) circle(3pt);
\fill (2,1) circle(3pt);
\fill (2,3) circle(3pt);
\fill (3,0) circle(3pt);
\fill (3,2) circle(3pt);
\fill (3,4) circle(3pt);
\fill (4,1) circle(3pt);
\fill (4,3) circle(3pt);
\fill (5,0) circle(3pt);
\fill (5,2) circle(3pt);
\fill (6,1) circle(3pt);
\node[magenta] at (6,1)[right]{{\scriptsize $r=3$}};
\node[magenta] at (6,2)[right]{{\scriptsize $r=4$}};
\node[magenta] at (6,3)[right]{{\scriptsize $r=5$}};
\node[magenta] at (6,4)[right]{{\scriptsize $r\geq 6$}};
\end{tikzpicture}

\begin{tikzpicture}[scale=.5]
\draw (-1,-1) -- (1,1);
\fill (-1,-1) circle(3pt);
\fill (0,0) circle(3pt);
\fill (1,1) circle(3pt);
\node at (0,0)[above left]{{\scriptsize $(a_{n-1},a)$}};
\node at (0,-1)[below]{{\small $\operatorname{I}_0(a)$}};
\end{tikzpicture}
\begin{tikzpicture}[scale=.5]
\draw (-1,1) -- (1,-1);
\fill (-1,1) circle(3pt);
\fill (0,0) circle(3pt);
\fill (1,-1) circle(3pt);
\node at (0,0)[below left]{{\scriptsize $(a_{n-1},a)$}};
\node at (0,-1)[below]{{\small $\operatorname{I}_2(a)$}};
\end{tikzpicture}
\begin{tikzpicture}[scale=.5]
\draw (-1,-1) -- (0,0) -- (1,-1);
\fill (-1,-1) circle(3pt);
\fill (0,0) circle(3pt);
\fill (1,-1) circle(3pt);
\node at (0,0)[above]{{\scriptsize $(a_{n-1},a)$}};
\node at (0,-1)[below]{{\small $\operatorname{II}_0(a)$}};
\end{tikzpicture}
\begin{tikzpicture}[scale=.5]
\draw (-1,1) -- (0,0) -- (1,1);
\fill (-1,1) circle(3pt);
\fill (0,0) circle(3pt);
\fill (1,1) circle(3pt);
\node at (0,0)[below]{{\scriptsize $(a_{n-1},a)$}};
\node at (0,-1)[below]{{\small $\operatorname{II}_2(a)$}};
\end{tikzpicture}
\caption{Admissible colorings of $\mathcal{B}_T$ for $n=3,4$}
\label{latticeT}
\end{figure}
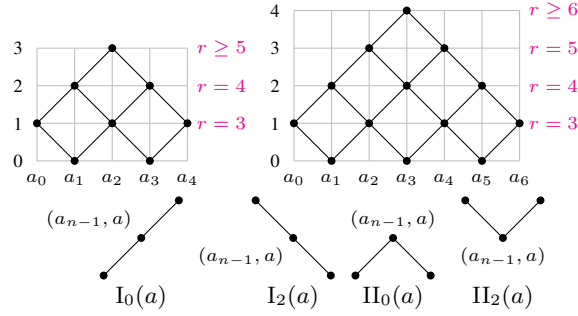

We fix an order on $\operatorname{I}_0(a)$, $\operatorname{I}_2(a)$, $\operatorname{II}_0(a)$, and $\operatorname{II}_2(a)$.
We decide the order of two elements contained in different sets according to the following order of the sets:\\
For a positive odd integer $n$,
\begin{align*}
&\operatorname{I}_0(1)<\operatorname{I}_2(1)<\operatorname{I}_0(3)<\operatorname{I}_2(3)<\dots<\operatorname{I}_0(n-2)<\operatorname{I}_2(n-2)\\
&\quad<\operatorname{II}_0(1)<\operatorname{II}_2(1)<\operatorname{II}_0(3)<\operatorname{II}_2(3)<\dots<\operatorname{II}_0(n-2)<\operatorname{II}_2(n-2)<\operatorname{II}_0(n).
\end{align*}
For a positive even integer $n$,
\begin{align*}
&\operatorname{I}_0(2)<\operatorname{I}_2(2)<\operatorname{I}_0(4)<\operatorname{I}_2(4)<\dots<\operatorname{I}_0(n-2)<\operatorname{I}_2(n-2)\\
&<\operatorname{II}_2(0)<\operatorname{II}_0(2)<\operatorname{II}_2(2)<\operatorname{II}_0(4)<\operatorname{II}_2(4)<\dots<\operatorname{II}_0(n-2)<\operatorname{II}_2(n-2)<\operatorname{II}_0(n).
\end{align*}
We remark that the $q$-admissibility implies that 
\begin{itemize}
\item $\operatorname{I}_{i}(k-1)=\emptyset$ and $\operatorname{II}_{i}(k-1)=\emptyset$ for $k\geq r$ and $i=0,2$ when $n+r$ is odd,
\item $\operatorname{I}_{i}(k-2)=\emptyset$, $\operatorname{II}_{2}(r-2)=\emptyset$ and $\operatorname{II}_{i}(k)=\emptyset$ for $k\geq r$ and $i=0,2$ when $n+r$ is even. 
\end{itemize}

In the same way, 
the basis $\mathcal{B}_Y$ decompose into
\begin{description}
\item[$\operatorname{I}'_0(a)$] $\beta_Y(a_1,a_2,\dots,a_{2n-3})$ such that $a=a_{n-2}+1=a_{n}-1$ (then $a_{n-1}=2$),
\item[$\operatorname{I}'_2(a)$] $\beta_Y(a_1,a_2,\dots,a_{2n-3})$ such that $a=a_{n-2}-1=a_{n}+1$ (then $a_{n-1}=2$), 
\item[$\operatorname{II}'_0(a)$] $\beta_Y(a_1,a_2,\dots,a_{2n-3})$ such that $a=a_{n-2}+1=a_{n}+1$ and $a_{n-1}=0$, 
\item[$\operatorname{II}'_2(a)$] $\beta_Y(a_1,a_2,\dots,a_{2n-3})$ such that $a=a_{n-2}+1=a_{n}+1$ and $a_{n-1}=2$. 
\end{description}
The admissible elements of $\mathcal{B}_Y$ are also described as diagrams in the same way to $\mathcal{B}_T$. (See Figure~\ref{latticeY}.) 
We remark that $\operatorname{II}'_2(1)$ always contains no admissible elements.

\begin{figure}
\centering
\begin{tikzpicture}[scale=.5]
\foreach \i in {0,...,4}{
\draw[lightgray] (\i,0) -- (\i,3);};
\foreach \j in {0,...,3}{
\draw[lightgray] (0,\j) -- (4,\j);};
\foreach \j in {0,...,3}{
\path (-.5,\j) node{{\scriptsize \j}};};
\foreach \j in {0,...,4}{
\path (\j,-.5) node{{\scriptsize $a_{\j}$}};};
\draw (0,1) -- (1,2);
\draw (3,0) -- (4,1);
\draw (4,1) -- (3,2);
\draw (1,0) -- (0,1);
\draw (1,0) -- (3,0);
\draw (1,2) -- (2,0) -- (3,2);
\fill (0,1) circle(3pt);
\fill (1,0) circle(3pt);
\fill (1,2) circle(3pt);
\fill (2,0) circle(3pt);
\fill (3,0) circle(3pt);
\fill (3,2) circle(3pt);
\fill (4,1) circle(3pt);
\node at (2,-.5)[below]{{\small $a_2=0$}};
\end{tikzpicture}
\begin{tikzpicture}[scale=.5]
\foreach \i in {0,...,4}{
\draw[lightgray] (\i,0) -- (\i,3);};
\foreach \j in {0,...,3}{
\draw[lightgray] (0,\j) -- (4,\j);};
\foreach \j in {0,...,3}{
\path (-.5,\j) node{{\scriptsize \j}};};
\foreach \j in {0,...,4}{
\path (\j,-.5) node{{\scriptsize $a_{\j}$}};};
\draw (0,1) -- (1,2);
\draw (4,1) -- (3,2);
\draw (1,2) -- (3,2);
\fill (0,1) circle(3pt);
\fill (1,2) circle(3pt);
\fill (2,2) circle(3pt);
\fill (3,2) circle(3pt);
\fill (4,1) circle(3pt);
\node at (2,-.5)[below]{{\small $a_2=2$}};
\end{tikzpicture}

\begin{tikzpicture}[scale=.5]
\foreach \i in {0,...,6}{
\draw[lightgray] (\i,0) -- (\i,4);};
\foreach \j in {0,...,4}{
\draw[lightgray] (0,\j) -- (6,\j);};
\foreach \j in {0,...,4}{
\path (-.5,\j) node{{\scriptsize \j}};};
\foreach \i in {0,...,6}{
\path (\i,-.5) node{{\scriptsize $a_{\i}$}};};
\draw (0,1) -- (2,3);
\draw (1,0) -- (2,1);
\draw (3,0) -- (5,2);
\draw (5,0) -- (6,1);
\draw (6,1) -- (4,3);
\draw (5,0) -- (4,1);
\draw (3,0) -- (1,2);
\draw (1,0) -- (0,1);
\draw (2,3) -- (3,0) -- (4,3);
\fill (0,1) circle(3pt);
\fill (1,0) circle(3pt);
\fill (1,2) circle(3pt);
\fill (2,1) circle(3pt);
\fill (2,3) circle(3pt);
\fill (3,0) circle(3pt);
\fill (4,1) circle(3pt);
\fill (4,3) circle(3pt);
\fill (5,0) circle(3pt);
\fill (5,2) circle(3pt);
\fill (6,1) circle(3pt);
\node at (3,-.5)[below]{{\small $a_3=0$}};
\end{tikzpicture}
\begin{tikzpicture}[scale=.5]
\foreach \i in {0,...,6}{
\draw[lightgray] (\i,0) -- (\i,4);};
\foreach \j in {0,...,4}{
\draw[lightgray] (0,\j) -- (6,\j);};
\foreach \j in {0,...,4}{
\path (-.5,\j) node{{\scriptsize \j}};};
\foreach \i in {0,...,6}{
\path (\i,-.5) node{{\scriptsize $a_{\i}$}};};
\draw (0,1) -- (2,3);
\draw (1,0) -- (4,3);
\draw (4,1) -- (5,2);
\draw (5,0) -- (6,1);
\draw (6,1) -- (4,3);
\draw (5,0) -- (2,3);
\draw (2,1) -- (1,2);
\draw (1,0) -- (0,1);
\fill (0,1) circle(3pt);
\fill (1,0) circle(3pt);
\fill (1,2) circle(3pt);
\fill (2,1) circle(3pt);
\fill (2,3) circle(3pt);
\fill (3,2) circle(3pt);
\fill (4,1) circle(3pt);
\fill (4,3) circle(3pt);
\fill (5,0) circle(3pt);
\fill (5,2) circle(3pt);
\fill (6,1) circle(3pt);
\node at (3,-.5)[below]{{\small $a_3=2$}};
\end{tikzpicture}
\caption{Admissible colorings of $\mathcal{B}_Y$ for $n=3,4$}
\label{latticeY}
\end{figure}
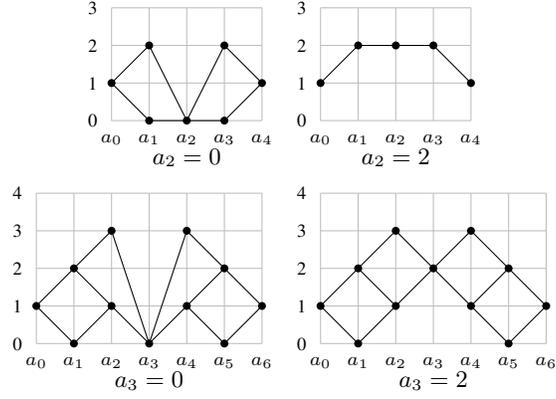

For each set, we fix an order on it. 
If two elements are contained in different sets, 
then we decide the order of them according to the following order on the sets:\\
For positive odd integer $n$,
\begin{align*}
&\operatorname{I}'_0(1)<\operatorname{I}'_2(1)<\operatorname{I}'_0(3)<\operatorname{I}'_2(3)<\dots<\operatorname{I}'_0(n-2)<\operatorname{I}'_2(n-2)\\
&\quad<\operatorname{II}'_0(1)<\operatorname{II}'_0(3)<\operatorname{II}'_2(3)<\operatorname{II}'_0(5)<\operatorname{II}'_2(5)<\dots<\operatorname{II}'_0(n)<\operatorname{II}'_2(n).
\end{align*}
For a positive even integer $n$,
\begin{align*}
&\operatorname{I}'_0(2)<\operatorname{I}'_2(2)<\operatorname{I}'_0(4)<\operatorname{I}'_2(4)<\dots<\operatorname{I}'_0(n-2)<\operatorname{I}'_2(n-2)\\
&\quad<\operatorname{II}'_0(2)<\operatorname{II}'_2(2)<\operatorname{II}'_0(4)<\operatorname{II}'_2(4)<\dots<\operatorname{II}'_0(n)<\operatorname{II}'_2(n).
\end{align*}

We denote the number of $q$-admissible elements of these sets by $k_i(a)=\left|\operatorname{I}_i(a)\right|$, 
$l_i(a)=\left|\operatorname{II}_i(a)\right|$, 
$k'_i(a)=\left|\operatorname{I}'_i(a)\right|$, and
$l'_i(a)=\left|\operatorname{II}'_i(a)\right|$ for $i=0,2$.
We remark that $k_0(a)=k_2(a)=k'_0(a)=k'_2(a)$ and $l_0(a)=l_2(a-2)=l'_0(a)=l'_2(a)$.

\subsection{The case with odd $n\geq3$}
Let $n\geq 3$ be an odd integer.
We will calculate the matrix representation $M=\rho_T((\sigma_1\sigma_2\dots\sigma_{n-1})^{n}\sigma_n(\sigma_1\sigma_2\dots\sigma_{n-1})^{-n}\sigma_n^{-1})$ with respect to $\mathcal{B}_T$ in a similar way to Section~\ref{even}. 
Let us determine the change of coordinates matrix from $\mathcal{B}_T$ to $\mathcal{B}_Y$. 
At the beginning, 
we consider about $q$-admissibility of $(a_{n-2},a_{n-1},a_{n})$ of $\mathcal{B}_T$ and $(a_{n-2},i,a_{n})$ of $\mathcal{B}_Y$ ($i=0,2$). 
\begin{RMK}
In what follows, 
we discuss in the case of $r-4\leq n-2$ when $r$ is odd and $r-3\leq n-2$ when $r$ is even. 
Otherwise we can carry the same argument by replacing $r-4$ and $r-3$ by $n-2$.
\end{RMK}
\begin{LEM}
\begin{itemize}
\item If $(a_{n-2},a_{n-1},a_{n})=(a-1,a,a+1)$ is $q$-admissible, then $(a_{n-2},2,a_{n})$ is $q$-admissible.
\item If $(a_{n-2},a_{n-1},a_{n})=(a+1,a,a-1)$ is $q$-admissible, then $(a_{n-2},2,a_{n})$ is $q$-admissible.
\item If $(a_{n-2},a_{n-1},a_{n})=(a-1,a,a-1)$ is $q$-admissible, then $(a_{n-2},0,a_{n})$ is $q$-admissible, and $(a_{n-2},2,a_{n})$ is $q$-admissible except for $a=1$.
\item If $(a_{n-2},a_{n-1},a_{n})=(a+1,a,a+1)$ is $q$-admissible, then $(a_{n-2},0,a_{n})$ is $q$-admissible, and $(a_{n-2},2,a_{n})$ is $q$-admissible except for $a=r-3$.
\end{itemize}
\end{LEM}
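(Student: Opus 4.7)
The proof is a direct case-by-case verification of the three defining conditions of $q$-admissibility for the proposed triple $(a_{n-2}, i, a_n)$ with $i \in \{0, 2\}$: the even-sum condition, the three non-negativity (triangle) inequalities, and the bounds $0 \le a, b, c \le r-2$ together with $a+b+c \le 2(r-2)$.

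The first step is to unpack the hypothesis. Saying that $(a_{n-2}, a_{n-1}, a_n)$ is $q$-admissible inside a coloring of $\mathcal{B}_T$ really encodes $q$-admissibility of the two vertex triples $(a_{n-2}, 1, a_{n-1})$ and $(a_{n-1}, 1, a_n)$, since those are the triples that actually occur at the trivalent vertices of $\beta_T$. Combined with the parity propagation $a_i \equiv i+1 \pmod{2}$ forced by $a_0 = 1$, the odd $n$ assumption makes $a = a_{n-1}$ odd and the neighbours $a_{n-2}, a_n$ even. Vertex admissibility then pins down the range of $a$: one gets $1 \le a \le r-3$ in the first, second, and fourth bullets, and $1 \le a \le r-2$ in the third bullet.

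For each of the four cases I then substitute the middle entry by $i \in \{0,2\}$ and check the conditions for $(a_{n-2}, i, a_n)$. The even-sum condition is automatic since all three entries are even. The triangle inequality $i \le a_{n-2} + a_n$ is the only nontrivial one of the three: in the first and second bullets $a_{n-2} + a_n = 2a \ge 2$ makes it immediate; in the third bullet it becomes $2 \le 2(a-1)$, which fails precisely at $a = 1$; in the fourth bullet it is again immediate. The entry bound $i \le r-2$ holds as soon as $r \ge 4$, which is in force. The sum bound $a_{n-2} + i + a_n \le 2(r-2)$ is the final obstruction: it always holds except in the fourth bullet at $i = 2$, $a = r-3$, where the sum equals $2r - 2 > 2(r-2)$. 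Putting these together, the two exceptions $a = 1$ in the third bullet and $a = r-3$ in the fourth bullet fall out exactly as stated.

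I do not anticipate any real obstacle: the argument is routine bookkeeping once the parity constraint from the position $n-1$ and the bounds inherited from vertex admissibility are set up. The only subtlety is reading ``$(a_{n-2}, a_{n-1}, a_n)$ is $q$-admissible'' as $q$-admissibility of the two vertex triples involving $a_{n-1}$, because interpreting it as admissibility of $(a-1, a, a+1)$ itself would force an even sum and contradict the fact that $a$ must be odd when $n$ is odd.
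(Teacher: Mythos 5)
Your proof is correct and takes essentially the same approach as the paper's: a direct verification of the parity, triangle, and bound conditions for $(a_{n-2},i,a_n)$, with the range $a\leq r-3$ (resp.\ $a\leq r-2$) extracted from vertex admissibility — the paper simply writes out one case and leaves the rest as ``the same way.'' Your reading of the hypothesis as $q$-admissibility of the two vertex triples $(a_{n-2},1,a_{n-1})$ and $(a_{n-1},1,a_n)$ is indeed the intended one, and correctly pins down the two exceptions $a=1$ and $a=r-3$.
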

\begin{proof}
We remark that $(a_{n-2},0,a_{n})$ is not admissible when $(a_{n-2},a_{n-1},a_{n})$ is admissible coloring of $\operatorname{I}_i(a)$ for $i=0,2$. 
$(a_{n-2},a_{n-1},a_{n})=(a-1,a,a+1)$ is $q$-admissible if and only if $a\leq r-3$. 
Then $a_{n-1}+2+a_{n}=2a+2\leq 2(a-2)$. In other cases, we can prove in the same way. 
\end{proof}
For $q$-admissible elements in $\mathcal{B}_T$, 
we transforms them into a sum of $q$-admissible elements in $\mathcal{B}_Y$ by calculating quantum $6$j-symbols. 
we obtain
\begin{align*}
&\beta_T(a_1,\dots,a_{n-3},a-1,a,a+1,a_{n+1},\dots,a_{2n-3})\\
&\qquad=\beta_Y(a_1,\dots,a_{n-3},a-1,2,a+1,a_{n+1},\dots,a_{2n-3}).
\end{align*}
\begin{align*}
&\beta_T(a_1,\dots,a_{n-3},a+1,a,a-1,a_{n+1},\dots,a_{2n-3})\\
&\qquad=\beta_Y(a_1,\dots,a_{n-3},a+1,2,a-1,a_{n+1},\dots,a_{2n-3}).
\end{align*}
\begin{align*}
&\beta_T(a_1,\dots,a_{n-3},a-1,a,a-1,a_{n+1},\dots,a_{2n-3})\\
&\quad=\frac{\left[a+1\right]}{\left[2\right]\left[a\right]}\beta_Y(a_1,\dots,a_{n-3},a-1,0,a-1,a_{n+1},\dots,a_{2n-3})\\
&\qquad+\frac{\left[a-1\right]}{\left[a\right]}\beta_Y(a_1,\dots,a_{n-3},a-1,2,a-1,a_{n+1},\dots,a_{2n-3}).
\end{align*}
For $a< r-3$,
\begin{align*}
&\beta_T(a_1,\dots,a_{n-3},a+1,a,a+1,a_{n+1},\dots,a_{2n-3})\\
&\quad=-\left[2\right]^{-1}\beta_Y(a_1,\dots,a_{n-3},a+1,0,a+1,a_{n+1},\dots,a_{2n-3})\\
&\qquad+\beta_Y(a_1,\dots,a_{n-3},a+1,2,a+1,a_{n+1},\dots,a_{2n-3}).
\end{align*}
For $a=r-3$,
\begin{align*}
&\beta_T(a_1,\dots,a_{n-3},a+1,a,a+1,a_{n+1},\dots,a_{2n-3})\\
&\quad=-\left[2\right]^{-1}\beta_Y(a_1,\dots,a_{n-3},a+1,0,a+1,a_{n+1},\dots,a_{2n-3}).
\end{align*}
We can see that $\operatorname{I}_i(a)$ transforms into $\operatorname{I}'_i(a)$ by identity for $i=0,2$,
$\operatorname{II}_0(1)$ into $\operatorname{II}'_0(1)$ by identity, $\operatorname{II}_2(a)\cup\operatorname{II}_0(a+2)$ into $\operatorname{II}'_0(a+2)\cup\operatorname{II}'_2(a+2)$ by a matrix $X(a)$:\\
For $a<r-3$,
\[
 X(a)
=\begin{bmatrix}
-\left[2\right]^{-1}I_{l_2(a)}&\frac{\left[a+3\right]}{\left[2\right]\left[a+2\right]}I_{l_2(a)}\\
I_{l_2(a)}&\frac{\left[a+1\right]}{\left[a+2\right]}I_{l_2(a)}
\end{bmatrix},
\quad 
X(a)^{-1}
=\begin{bmatrix}
-\frac{\left[a+1\right]}{\left[a+2\right]}I_{l_2(a)}&\frac{\left[a+3\right]}{\left[2\right]\left[a+2\right]}I_{l_2(a)}\\
I_{l_2(a)}&\left[2\right]^{-1}I_{l_2(a)}
\end{bmatrix}.
\]
and,
\[
 X(r-3)
=-\left[2\right]^{-1}I_{l_2(r-3)}.
\]
Hence, 
the change of coordinates matrix is 
\[
 I_{k_{r-4}}\oplus I_{l_1(1)}\oplus X(1)\oplus X(3)\oplus\dots\oplus X(r-4) \quad \text{($r$ is odd)},
\]
or
\[
 I_{k_{r-3}}\oplus I_{l_1(1)}\oplus X(1)\oplus X(3)\oplus\dots\oplus X(r-5)\oplus X(r-3) \quad \text{($r$ is even)},
\]
where $k_m=2(k_1(1)+k_1(3)+\dots+k_1(m))$ for odd integer $m$.
It is easy to see that the matrix representation of $\sigma_{n}^j$ with respect to $\mathcal{B}_Y$ is 
\begin{equation*}
\rho_{Y}(\sigma_n^j)=
\begin{cases}
q^{-\frac{j}{4}}I_{k_{r-4}}\oplus (-1)^jq^{\frac{3j}{4}}I_{l_1(1)}\oplus Y(3)^j\oplus Y(5)^j\oplus\dots\oplus Y(r-2)^j &\text{($r$ is odd)},\\
q^{-\frac{j}{4}}I_{k_{r-3}}\oplus (-1)^jq^{\frac{3j}{4}}I_{l_1(1)}\oplus Y(3)^j\oplus Y(5)^j\oplus\dots\oplus Y(r-1)^j&\text{($r$ is even)},
\end{cases}
\end{equation*}
where
$Y(a)=-q^{\frac{3}{4}}I_{l'_2(a)}\oplus q^{-\frac{1}{4}}I_{l'_2(a)}$ for $a<r-2$ and $Y(r-1)=-q^{\frac{3}{4}}I_{l'_2(r-1)}$.
Therefore, 
\begin{align*}
\rho_T(\sigma_n^j)&=q^{-\frac{j}{4}}I_{k_{r-4}}\oplus (-1)^jq^{\frac{3j}{4}}I_{l_1(1)}\oplus X(1)^{-1}Y(3)^jX(1)\oplus X(3)^{-1}Y(5)^jX(3)\\
&\quad\oplus\dots\oplus X(r-4)^{-1}Y(r-2)^jX(r-4),
\end{align*}
if $r$ is odd.
\begin{align*}
\rho_T(\sigma_n^j)&=q^{-\frac{j}{4}}I_{k_{r-3}}\oplus (-1)^jq^{\frac{3j}{4}}I_{l_1(1)}\oplus X(1)^{-1}Y(3)^jX(1)\oplus X(3)^{-1}Y(5)^jX(3)\\
&\quad\oplus\dots\oplus X(r-5)^{-1}Y(r-3)^jX(r-5)\oplus (-1)^jq^{\frac{3j}{4}}I_{l_2(r-3)},
\end{align*}
if $r$ is even.
The braid $(\sigma_1\sigma_2\dots\sigma_{m-1})^m$ acts on the strands ranging from the first to the $m$-th as a left-hand full twist for $m=2,3,\dots,2n$. 
This action on $\beta_T(a_1,a_2,\dots,a_{2n-3})$ is easily calculated by using isotopy invariance of the skein element. 
In fact,
\begin{align}
\rho((\sigma_1\sigma_2\dots\sigma_{m-1})^m)(\beta_T(a_1,a_2,\dots,a_{2n-3}))
&=(-q^{\frac{3}{4}})^m(-1)^{a_{m-1}}q^{-\frac{a_{m-1}^2+2a_{m-1}}{4}}
\beta_T(a_1,a_2,\dots,a_{2n-3})\label{fulltwist}\\
&=(-1)^{m+a_{m-1}}q^{\frac{3m}{4}}q^{-\frac{a_{m-1}^2+2a_{m-1}}{4}}\beta_T(a_1,a_2,\dots,a_{2n-3})\notag
\end{align}
Thus,
\begin{align*}
\rho_T((\sigma_1\sigma_2\dots\sigma_{n-1})^{n})
&=f_1(q)I_{2k_1(1)}\oplus f_3(q)I_{2k_1(3)}\oplus\dots\oplus f_{r-4}(q)I_{2k_1(r-4)}\\
&\quad\oplus f_{1}(q)I_{l_1(1)}\oplus Z(1)\oplus Z(3)\oplus\dots\oplus Z(r-4),
\end{align*}
if $r$ is odd.
\begin{align*}
\rho_T((\sigma_1\sigma_2\dots\sigma_{n-1})^{n})
&=f_1(q)I_{2k_1(1)}\oplus f_3(q)I_{2k_1(3)}\oplus\dots\oplus f_{r-3}(q)I_{2k_1(r-3)}\\
&\quad\oplus f_{1}(q)I_{l_1(1)}\oplus Z(1)\oplus Z(3)\oplus\dots\oplus Z(r-5)\oplus f_{r-3}(q)I_{l_2(r-3)},
\end{align*}
if $r$ is even.
In the above, $f_a(q)=q^{\frac{3n}{4}}q^{-\frac{a^2+2a}{4}}$ and 
$
Z(a)=
f_{a}(q)I_{l_2(a)}\oplus f_{a+2}(q)I_{l_0(a+2)}.
$
Consequently, we denote the representation matrix of $\rho_T((\sigma_1\sigma_2\dots\sigma_{n-1})^{n}\sigma_n(\sigma_1\sigma_2\dots\sigma_{n-1})^{-n}\sigma_n^{-1})$ by $M$,
\begin{align*}
M=
\begin{cases}
I_{k_{r-4}}\oplus I_{l_1(1)}\oplus M(1)\oplus M(3)\oplus\dots\oplus M(r-4) & \text{if $r$ is odd},\\ 
I_{k_{r-3}}\oplus I_{l_1(1)}\oplus M(1)\oplus M(3)\oplus\dots\oplus M(r-5)\oplus I_{l_2(r-3)} & \text{if $r$ is even}, 
\end{cases}
\end{align*}
where
{\small
\begin{align*}
&M(a)=Z(a)X(a)^{-1}Y(a)X(a)Z(a)^{-1}X(a)^{-1}Y(a)^{-1}X(a)\\
&=
\begin{bmatrix}
\left(1-(1-f_a(q)f_{a+2}(q^{-1}))
\frac{\left[a+1\right]\left[a+3\right]}{\left[a+2\right]^2}\right)I_{l_2(a)}
&(1-f_a(q)f_{a+2}(q^{-1}))
\frac{(-q^{\frac{1}{2}}\left[a+1\right]+q^{-\frac{1}{2}}\left[a+3\right])\left[a+1\right]\left[a+3\right]}{\left[2\right]\left[a+2\right]^3}I_{l_2(a)}\\
(1-f_a(q^{-1})f_{a+2}(q))
\frac{q^{-\frac{1}{2}}\left[a+1\right]-q^{\frac{1}{2}}\left[a+3\right]}{\left[2\right]\left[a+2\right]}I_{l_2(a)}
&\left(1-(1-f_a(q^{-1})f_{a+2}(q))
\frac{\left[a+1\right]\left[a+3\right]}{\left[a+2\right]^2}\right)I_{l_2(a)}
\end{bmatrix}.
\end{align*}}
Let us compute the trace of $M(a)$.
\begin{align*}
\operatorname{tr}M(a)
&=\left(2-(2-f_a(q)f_{a+2}(q^{-1})-f_a(q^{-1})f_{a+2}(q))\frac{\left[a+1\right]\left[a+3\right]}{\left[a+2\right]^2}\right)l_2(a)\\
&=\left(2-(2-q^{a+2}-q^{-a-2})\frac{\left[a+1\right]\left[a+3\right]}{\left[a+2\right]^2}\right)l_2(a)\\
&=\left(2+(q^{\frac{a+2}{2}}-q^{-\frac{a+2}{2}})^2\frac{\left[a+1\right]\left[a+3\right]}{\left[a+2\right]^2}\right)l_2(a)\\
&=2l_2(a)+(q^{\frac{a+1}{2}}-q^{-\frac{a+1}{2}})(q^{\frac{a+3}{2}}-q^{-\frac{a+3}{2}})l_2(a)
\end{align*}

We show that 
$\rho((\sigma_1\sigma_2\dots\sigma_{n-1})^{n}\sigma_n(\sigma_1\sigma_2\dots\sigma_{n-1})^{-n}\sigma_n^{-1})$ 
has infinite order in $PGL(\SK)$ for some primitive $r$-th roots of unity $q$. 
Let us consider when $M(a)$ has infinite order by the same argument in \cite{Masbaum99, Masbaum17}. 
When $M(a)$ has finite order in $PGL_{2l_2(a)}(\mathbb{C})$, all eigenvalues of $M(a)$ should be roots of unity and $\left|\operatorname{tr}M(a)\right|\leq 2l_2(a)$ for any $a$. 
We can prove $M(a)$ has infinite order if $\operatorname{tr}M(a)>2l_2(a)$ by contradiction. 
We already calculated $\operatorname{tr}M(a)$. This condition is
\[
(q^{\frac{a+1}{2}}-q^{-\frac{a+1}{2}})(q^{\frac{a+3}{2}}-q^{-\frac{a+3}{2}})>0.
\]
$\operatorname{II}_{2}(1)$ and $\operatorname{II}_0(3)$ are non-empty sets if $r\geq 5$. 
Therefore, we pick out a primitive roots of unity of order $r\geq 5$ which satisfies $\operatorname{tr}M(1)>2l_2(1)$, that is,  $(q^{\frac{a+1}{2}}-q^{-\frac{a+1}{2}})(q^{\frac{a+3}{2}}-q^{-\frac{a+3}{2}})>0$. 
Then, the angle of $q=\exp(i\theta)$ is $\pi/2<\theta<\pi$ or $\pi<\theta<3\pi/2$.
It is easily seen that such primitive root of unity $q$ exists for any $r\geq 5$ except for $r= 6$. 
For example, $q=\exp(3\pi/5)$ ($r=10$) satisfies it.
In the case of $r=6$, we calculate $M(1)$ and its eigenvalues for $q=\pi/3$.
Therefore, we obtain the following.
\begin{LEM}\label{Yuasalemodd}
Let $n$ be odd integer such that $2n\geq 6$. 
Then, there exists a primitive $r$-th root of unity $q$ such that $M$ has infinite order in $PGL(\mathbb{C})$ for any $r\geq 5$ and $r\neq 6$.
\end{LEM}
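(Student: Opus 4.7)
The plan is to reduce the claim for $M$ to the analogous claim for a single diagonal block, namely $M(1)$. Since $M$ is a block-diagonal direct sum that contains $M(1)$ as one summand whenever $l_2(1)\geq 1$, and this positivity holds for every $r\geq 5$ (one can exhibit an explicit $q$-admissible element of $\operatorname{II}_2(1)$ in that range), infinite order of $M(1)$ in $PGL_{2l_2(1)}(\mathbb{C})$ forces infinite order of $M$ in $PGL(\SK)$. So the whole lemma boils down to producing, for each admissible $r$, a primitive $r$-th root of unity $q$ such that $M(1)$ has infinite order.

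The key step is the trace obstruction used in \cite{Masbaum99, Masbaum17}. If $M(1)$ had finite order in $PGL_{2l_2(1)}(\mathbb{C})$, then after rescaling by an appropriate root of unity it would have finite order in $GL_{2l_2(1)}(\mathbb{C})$; all of its eigenvalues would then be roots of unity of a single modulus, forcing $|\operatorname{tr}M(1)|\leq 2l_2(1)$. Combining this with the trace formula already derived,
$$\operatorname{tr} M(1) = 2 l_2(1) + (q - q^{-1})(q^2 - q^{-2})\, l_2(1),$$
it is enough to find a primitive $r$-th root of unity $q$ for which $(q - q^{-1})(q^2 - q^{-2})$ is strictly positive as a real number.

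Setting $q = e^{i\theta}$, a direct trigonometric simplification gives
$$(q - q^{-1})(q^2 - q^{-2}) = -8 \sin^2\theta \, \cos\theta,$$
so positivity reduces to the elementary condition $\sin\theta \neq 0$ and $\cos\theta < 0$, i.e.\ $\theta \in (\pi/2,\pi)\cup(\pi,3\pi/2)$. Writing $q = \exp(2\pi i k/r)$ with $\gcd(k,r)=1$, one needs $r/4 < k < 3r/4$ with $k\neq r/2$. For $r=5$ take $k=2$; for $r=7$ take $k=3$; for $r=8$ take $k=3$; for $r=10$ take $k=3$ (recovering the example $q=\exp(3\pi i/5)$ in the excerpt); and for all remaining $r\geq 9$ the interval $(r/4,3r/4)$ has length $r/2\geq 9/2$, which together with the density of totatives guarantees a valid $k$.

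The only genuine obstacle is this last existence step, which is essentially number-theoretic. It is precisely at $r=6$ that the argument fails: the only integer in $(3/2,9/2)\setminus\{3\}$ is $2$ or $4$, neither coprime to $6$, so no primitive $6$-th root of unity satisfies the sign condition, and this is exactly the case excluded from the statement. For all other $r\geq 5$ a case split on small values followed by a pigeonhole-style count of totatives in $(r/4,3r/4)$ produces the required $q$, completing the proof.
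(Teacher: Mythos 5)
Your argument is correct and follows the paper's own proof essentially verbatim: both reduce to the block $M(1)$ (nonempty for $r\geq 5$), invoke the finite-order trace bound $\left|\operatorname{tr}M(1)\right|\leq 2l_2(1)$ from \cite{Masbaum99}, use the trace formula $\operatorname{tr}M(1)=2l_2(1)+(q-q^{-1})(q^{2}-q^{-2})l_2(1)$, and translate positivity into the angle condition $\theta\in(\pi/2,\pi)\cup(\pi,3\pi/2)$, which fails only at $r=6$. Your explicit choices of $k$ (and the pigeonhole count of totatives for large $r$) merely make precise the paper's assertion that such a primitive root ``is easily seen'' to exist, including its example $q=\exp(3\pi i/5)$ for $r=10$.
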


\subsection{The case with even $n\geq4$}
Let $n\geq 4$ be an even integer. 
We can obtain the following as is the case for an odd $n$.
The change of coordinates matrix is 
\[
 I_{k_{r-4}}\oplus X(0)\oplus X(2)\oplus\dots\oplus X(r-4) \quad \text{($r$ is even)},
\]
\[
 I_{k_{r-3}}\oplus X(0)\oplus X(2)\oplus\dots\oplus X(r-5)\oplus X(r-3) \quad \text{($r$ is odd)},
\]
where $k_{m}=2(k_1(0)+k_1(2)+\dots+k_1(m))$ for positive even integer $m$.
The matrix representation of $\sigma_{n}^j$ with respect to $\mathcal{B}_Y$ is 
\begin{equation*}
\rho_{Y}(\sigma_n^j)=
\begin{cases}
q^{-\frac{j}{4}}I_{k_{r-4}}\oplus Y(2)^j\oplus Y(4)^j\oplus\dots\oplus Y(r-2)^j &\text{($r$ is even)},\\
q^{-\frac{j}{4}}I_{k_{r-3}}\oplus Y(2)^j\oplus Y(4)^j\oplus\dots\oplus Y(r-1)^j&\text{($r$ is odd)}.
\end{cases}
\end{equation*}
If $r$ is even, then
\begin{align*}
\rho_T(\sigma_n^j)&=q^{-\frac{j}{4}}I_{k_{r-4}}\oplus X(0)^{-1}Y(2)^jX(0)\oplus X(2)^{-1}Y(4)^jX(2)\\
&\quad\oplus\dots\oplus X(r-4)^{-1}Y(r-2)^jX(r-4),\\
\rho_T((\sigma_1\sigma_2\dots\sigma_{n-1})^{n})
&=f_2(q)I_{2k_1(2)}\oplus f_4(q)I_{2k_1(4)}\oplus\dots\oplus f_{r-4}(q)I_{2k_1(r-4)}\\
&\quad\oplus Z(0)\oplus Z(2)\oplus\dots\oplus Z(r-4).
\end{align*}
If $r$ is odd, then
\begin{align*}
\rho_T(\sigma_n^j)&=q^{-\frac{j}{4}}I_{k_{r-3}}\oplus X(0)^{-1}Y(2)^jX(0)\oplus X(2)^{-1}Y(4)^jX(2)\\
&\quad\oplus\dots\oplus X(r-5)^{-1}Y(r-3)^jX(r-5)\oplus (-1)^jq^{\frac{3j}{4}}I_{l_2(r-3)},\\
\rho_T((\sigma_1\sigma_2\dots\sigma_{n-1})^{n})
&=f_1(q)I_{2k_1(1)}\oplus f_3(q)I_{2k_1(3)}\oplus\dots\oplus f_{r-3}(q)I_{2k_1(r-3)}\\
&\quad\oplus Z(0)\oplus Z(2)\oplus\dots\oplus Z(r-5)\oplus f_{r-3}(q)I_{l_2(r-3)},
\end{align*}
As a result, we obtain the representation matrix:
\begin{align*}
M=
\begin{cases}
I_{k_{r-4}}\oplus M(0)\oplus M(2)\oplus\dots\oplus M(r-4) & \text{if $r$ is even},\\ 
I_{k_{r-3}}\oplus M(0)\oplus M(2)\oplus\dots\oplus M(r-5)\oplus I_{l_2(r-3)} & \text{if $r$ is odd}. 
\end{cases}
\end{align*}

We show that $M_n(a)$ has infinite order by using the same way as before.
$M_n(0)$ exists if $r\geq 4$, and $\operatorname{tr}M_n(0)> 2l_2(0)$ if $(q^{\frac{1}{2}}-q^{-\frac{1}{2}})(q^{\frac{3}{2}}-q^{-\frac{3}{2}})>0$.
It can be seen that there exists a primitive $r$-th root of unity satisfying it for $r\geq 5$ and $r\not\in\{6,10\}$. 
This result is included in Lemma~\ref{Masbaumlem}.
We next consider when $r\geq 6$. 
Then, we can consider $M_n(2)$ because $l_2(2)>0$.
$\operatorname{tr}M(2)> 2l_2(2)$ is equivalent to $(q^{\frac{3}{2}}-q^{-\frac{3}{2}})(q^{\frac{5}{2}}-q^{-\frac{5}{2}})>0$. 
A tenth root of unity $q=\exp(3\pi/5)$ satisfies this condition. 
\begin{LEM}\label{Yuasalemeven}
Let $n$ be even integer such that $2n\geq 8$. 
Then, there exists a primitive $r$-th root of unity $q$ such that $M$ has infinite order in $PGL(\mathbb{C})$ for any $r\geq 5$ and $r\neq 6$.
\end{LEM}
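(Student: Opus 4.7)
The plan is to adapt the odd-$n$ argument of Lemma~\ref{Yuasalemodd} to the case of even $n \geq 4$. Since $M$ has been decomposed as a block diagonal sum of scalar blocks and the blocks $M(a)$ for even $a$, it suffices to find, for each $r \geq 5$ with $r \neq 6$, a primitive $r$-th root of unity $q$ and one even $a$ with $l_2(a) > 0$ such that $M(a)$ has infinite order in $PGL_{2l_2(a)}(\mathbb{C})$; the block structure then forces $M$ to have infinite order in $PGL(\SK)$.

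By the trace criterion of \cite{Masbaum99} employed in the odd case, and by the trace formula already derived (whose form is identical here, since each $M(a)$ has the same $2\times 2$ block shape), this reduces to producing $q$ and $a$ for which
\[
(q^{(a+1)/2} - q^{-(a+1)/2})(q^{(a+3)/2} - q^{-(a+3)/2}) > 0.
\]
For $q = e^{i\theta}$ this is the sine condition $\sin((a+1)\theta/2)\sin((a+3)\theta/2) < 0$.

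For $a = 0$ the condition becomes $\sin(\theta/2)\sin(3\theta/2) < 0$, i.e.\ $\theta \in (2\pi/3, 4\pi/3)$. A case-by-case inspection shows that this open arc contains a primitive $r$-th root of unity for every $r \geq 5$ except $r = 6$ and $r = 10$; explicit witnesses suffice for small $r$ (for example $q = e^{4\pi i/5}$ when $r = 5$, $q = e^{3\pi i/4}$ when $r = 8$, $q = e^{8\pi i/9}$ when $r = 9$), while for $r \geq 11$ the $\phi(r)$ primitive $r$-th roots are spread widely enough on the unit circle for at least one to fall in an arc of length $2\pi/3$.

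The only remaining case is $r = 10$, and here I would switch to the block $M(2)$, which is non-trivial since $l_2(2) > 0$ whenever $r \geq 6$. At $q = e^{3\pi i/5}$ one has $\sin(9\pi/10) > 0$ and $\sin(3\pi/2) = -1 < 0$, so $\sin(3\theta/2)\sin(5\theta/2) < 0$ and the trace of $M(2)$ strictly exceeds $2l_2(2)$, giving infinite order. The genuine obstacle in this plan, as in the odd case, is this exceptional value $r = 10$: the natural block $M(0)$ admits no primitive tenth root of unity making its trace exceed the dimension, so one must climb up to $M(2)$ to rescue the argument.
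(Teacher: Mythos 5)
Your proposal is correct and follows essentially the same route as the paper: decompose $M$ into the blocks $M(a)$, apply the trace criterion $\operatorname{tr}M(a)>2l_2(a)$ via the sign of $(q^{(a+1)/2}-q^{-(a+1)/2})(q^{(a+3)/2}-q^{-(a+3)/2})$, use $M(0)$ for $r\geq 5$ with $r\notin\{6,10\}$, and rescue $r=10$ by passing to $M(2)$ at $q=\exp(3\pi i/5)$, exactly as in the paper (which also notes the $M(0)$ case is already contained in Lemma~2.7).
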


\begin{RMK}
In the case of $r=4$, the matrix $M(0)$ has the order $2$.
In the case of $r=6$, the matrix $M(1)$ has order $3$, the matrices $M(0)$ and $M(2)$ have order $2$.
\end{RMK}
In fact,
we can simplify $M(a)$ as
\begin{align*}
% &M(a)\\
%&=
%\begin{bmatrix}
%\left(1+q^{\frac{a+2}{2}}\frac{(q^{\frac{a+1}{2}}-q^{-\frac{a+1}{2}})(q^{\frac{a+3}{2}}-q^{-\frac{a+3}{2}})}{(q^{\frac{a+2}{2}}-q^{-\frac{a+2}{2}})}\right)I_{l_2(1)}
%&(q^{\frac{1}{2}}-q^{-\frac{1}{2}})\frac{(q^{\frac{a+1}{2}}-q^{-\frac{a+1}{2}})(q^{\frac{a+3}{2}}-q^{-\frac{a+3}{2}})}{(q^{\frac{a+2}{2}}-q^{-\frac{a+2}{2}})^2}I_{l_2(1)}\\
%-(q^{\frac{1}{2}}-q^{-\frac{1}{2}})I_{l_2(1)}
%&\left(1-q^{-\frac{a+2}{2}}\frac{(q^{\frac{a+1}{2}}-q^{-\frac{a+1}{2}})(q^{\frac{a+3}{2}}-q^{-\frac{a+3}{2}})}{(q^{\frac{a+2}{2}}-q^{-\frac{a+2}{2}})}\right)I_{l_2(1)}
%\end{bmatrix}\\
%&=
\begin{bmatrix}
\left(1+2iq^{\frac{q+2}{2}}\frac{\sin(\frac{a+1}{2}\theta)\sin(\frac{a+3}{2}\theta)}{\sin(\frac{a+2}{2}\theta)}\right)I_{l_2(1)}
&-2i\sin(\frac{\theta}{2})\frac{\sin(\frac{a+1}{2}\theta)\sin(\frac{a+3}{2}\theta)}{\sin^2(\frac{a+2}{2}\theta)}I_{l_2(1)}\\
-2i\sin(\frac{\theta}{2})I_{l_2(1)}
&\left(1-2iq^{-\frac{a+2}{2}}\frac{\sin(\frac{a+1}{2}\theta)\sin(\frac{a+3}{2}\theta)}{\sin(\frac{a+2}{2}\theta)}\right)I_{l_2(1)}
\end{bmatrix},
\end{align*}
where $q=\exp(i\theta)$.
There exists a matrix $M(1)$ if $r=6$.
By substituting $a=1$ and $\theta=\pi/3$, we obtain
\[
 M(1)=
\begin{bmatrix}
-\frac{1}{2}I_{l_2(1)}&-\frac{3i}{4}I_{l_2(1)}\\
-iI_{l_2(1)}&-\frac{1}{2}I_{l_2(1)}
\end{bmatrix}.
\]
If $\theta=5\pi/3$, then
\[
 M(1)=
\begin{bmatrix}
-\frac{1}{2}I_{l_2(1)}&\frac{3i}{4}I_{l_2(1)}\\
iI_{l_2(1)}&-\frac{1}{2}I_{l_2(1)}
\end{bmatrix}.
\]
The eigenvalues of $M(1)$ is $\exp(2\pi i/3)$ and $\exp(4\pi i/3)$ and $M(1)^3=I_{l_2(1)}$ in $PGL_{l_2(1)}(\mathbb{C})$.

We obtain the answer for Masbaum's comments ($2n\geq 6, m=5$) from Lemma~\ref{Yuasalemodd} and Lemma~\ref{Yuasalemeven}.

\begin{THM}\label{Yuasaeven}
For any $2n\geq 6$, $m\geq 5$ and $m\neq6$, 
$N_m$ has infinite index in $\mathcal{M}(0,2n)$.
\end{THM}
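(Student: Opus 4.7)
The strategy is to exhibit, for each admissible $m$, a primitive root of unity $q$ such that the associated projective representation $\rho$ of $\mathcal{M}(0,2n)$ and the mapping class $w$ represented by the braid $(\sigma_1\sigma_2\cdots\sigma_{n-1})^n\sigma_n(\sigma_1\sigma_2\cdots\sigma_{n-1})^{-n}\sigma_n^{-1}$ satisfy $\rho(N_m)=\{1\}$ in $PGL(\SK)$ while $\rho(w)$ has infinite order. Once such a $q$ is in hand, the representation $\rho$ descends to an infinite-image (projective) representation of $\mathcal{M}(0,2n)/N_m$, which forces $N_m$ to have infinite index.

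Two preceding results do most of the work. Proposition~\ref{Masbaumprop} yields $\rho(h_i^m)=1$ in $PGL(\SK)$ whenever $q^m=(-1)^m$; since any two half-twists are conjugate, this suffices to kill every element of $N_m$. Lemmas~\ref{Yuasalemodd} and~\ref{Yuasalemeven}, combined with the fact that the matrix $M$ defined in Section~\ref{remainder} is a presentation of $\rho(w)$ with respect to $\mathcal{B}_T$, produce a primitive $r$-th root of unity making $\rho(w)$ of infinite order for any $r\geq 5$ with $r\neq 6$. My plan is to set $r=m$ when $m$ is even and $r=2m$ when $m$ is odd; in either case, every primitive $r$-th root of unity $q$ so obtained automatically satisfies $q^m=(-1)^m$ (odd $k$ coprime to $2m$ gives $q^m=(-1)^k=-1$ when $m$ is odd, and $q^m=1$ when $r=m$ is even).

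The remaining step is the arithmetic verification that this $r$ lies in the range $r\geq 5$, $r\neq 6$. For odd $m\geq 5$ one has $r=2m\geq 10$; for even $m$ with $m\neq 6$ one has $m\geq 8$ and hence $r=m\geq 8$. Thus in every case allowed by the hypothesis, Lemmas~\ref{Yuasalemodd} and~\ref{Yuasalemeven} apply, and the same $q$ simultaneously kills $N_m$ via Proposition~\ref{Masbaumprop}, completing the proof. The exclusion of $m=6$ is a genuine obstruction rather than a cosmetic one: the relation $q^6=1$ forces $r\mid 6$, and the only such $r$ with $r\geq 5$ is $r=6$, precisely the value Lemma~\ref{Yuasalemeven} cannot handle. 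All of the real difficulty is therefore concentrated in Lemmas~\ref{Yuasalemodd} and~\ref{Yuasalemeven}, already established; the only step that remains is the compatibility check between those lemmas and Proposition~\ref{Masbaumprop}, which is carried out by the choice of $r$ above.
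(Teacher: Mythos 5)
Your proposal is correct and follows essentially the same route as the paper: combine Proposition~\ref{Masbaumprop} (to kill $N_m$ at an $m$-th or $2m$-th root of unity) with Lemmas~\ref{Yuasalemodd} and~\ref{Yuasalemeven} (to make $\rho$ of the commutator $(\sigma_1\cdots\sigma_{n-1})^n\sigma_n(\sigma_1\cdots\sigma_{n-1})^{-n}\sigma_n^{-1}$ of infinite order), choosing $r=m$ for even $m$ and $r=2m$ for odd $m$. Your explicit arithmetic check that this $r$ always lands in the range $r\geq 5$, $r\neq 6$ is a welcome elaboration of what the paper leaves implicit.
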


\begin{proof}
From Proposition~\ref{Masbaumprop}, 
$\rho(\sigma_i^m)$ is trivial if
$q$ is an $m$-th root of unity ($m\colon$ even) or a $2m$-th root of unity ($m\colon$ odd).
By Lemma~\ref{Yuasalemodd} and \ref{Yuasalemeven}, 
there exists a primitive root of unity $q$ satisfying $\rho(\sigma_i^m)=\text{Id}$ and $\rho_T((\sigma_1\sigma_2\dots\sigma_{n-1})^n\sigma_n(\sigma_1\sigma_2\dots\sigma_{n-1})^{-n}\sigma_n^{-1})$ has infinite order. 
For example, $q=\exp(3\pi i/5)$ for $m=5$.
\end{proof}

Consequently, we completely determined the index of a subgroup $N_m(0,2n)$ in $\mathcal{M}(0,2n)$ is infinite or not for any $n$ and $m$.

As in \cite[Theorem~C]{Stylianakis17}, 
this theorem implies some results about $N_{2m}(0,2n+1)$ in $\mathcal{M}(0,2n+1)$. 
$N_{2m}(0,2n)$ is a subgroup of the pure mapping class group $\mathcal{PM}(0,2n)$ of $2n$-punctured sphere. 
A forgetful homomorphism $\mathcal{PM}(0,2n+1)\to\mathcal{PM}(0,2n)$ between pure mapping class groups projects $N_{2m}(0,2n+1)$ on $N_{2m}(0,2n)$. 
Therefore, $N_{2m}(0,2n+1)$ has infinite index in $\mathcal{PM}(0,2n+1)$ if $N_{2m}(0,2n)$ infinite index in $\mathcal{PM}(0,2n)$. 
Then, $N_{2m}(0,2n+1)$ has infinite index in $\mathcal{PM}(0,2n+1)$. 
$N_{2m}(0,2n+1)$ has also infinite index in $\mathcal{M}(0,2n+1)$ because $\mathcal{PM}(0,2n+1)$ is finite index subgroup of $\mathcal{M}(0,2n+1)$.
We obtain the following corollary from Theorem~\ref{Masbaum}, Theorem~\ref{Stylianakis}, and Theorem~\ref{Humphriesinfty}. 
\begin{COR}[{\cite[Theorem~C]{Stylianakis17}}]
For $2n\geq 4$ and $2m\geq 6$, or $2n\geq 6$ and $2m\geq 4$, 
$N_{2m}(2n+1)$ has infinite order in $\mathcal{M}(0,2n+1)$.
\end{COR}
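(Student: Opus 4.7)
\medskip

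The plan is to deduce the odd-puncture case from the already-established even-puncture results via the Birman forgetful map. First I would note that since the square of a half-twist $h$ fixes every puncture individually, any generator $h^{2m}$ of $N_{2m}(0,2n+1)$ lies in the pure mapping class group $\mathcal{PM}(0,2n+1)$; because $\mathcal{PM}(0,2n+1)$ is normal in $\mathcal{M}(0,2n+1)$, its contains the entire normal closure, so $N_{2m}(0,2n+1)\subseteq\mathcal{PM}(0,2n+1)$. The same argument gives $N_{2m}(0,2n)\subseteq\mathcal{PM}(0,2n)$, so under the hypothesis (via Theorems~\ref{Masbaum}, \ref{Stylianakis}, and \ref{Humphriesinfty} applied to the relevant $(2n,2m)$ ranges), the subgroup $N_{2m}(0,2n)$ has infinite index in $\mathcal{PM}(0,2n)$ (finite index of $\mathcal{PM}$ in $\mathcal{M}$ transfers infinite index in one direction).

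Next I would invoke the Birman exact sequence to obtain a surjective forgetful homomorphism $\Phi\colon\mathcal{PM}(0,2n+1)\to\mathcal{PM}(0,2n)$ defined by forgetting one puncture $p$. The key verification is that $\Phi(N_{2m}(0,2n+1))\subseteq N_{2m}(0,2n)$. Since the conjugacy class of a half-twist under $\mathcal{M}(0,2n+1)$ is exactly the set of all half-twists, $N_{2m}(0,2n+1)$ is generated as a subgroup by $\{h'^{2m}: h'\text{ a half-twist}\}$. For each such generator, I would split into cases: if $h'$ is a half-twist between two punctures neither of which is $p$, then $\Phi(h')$ is a half-twist of the $2n$-punctured sphere and $\Phi(h'^{2m})=\Phi(h')^{2m}\in N_{2m}(0,2n)$; if $h'$ is a half-twist involving $p$, then $h'^{2}$ is a Dehn twist about a curve enclosing $p$ and one other puncture, so after forgetting $p$ that curve bounds a disk containing a single puncture and its Dehn twist is trivial, hence $\Phi(h'^{2m})=1\in N_{2m}(0,2n)$.

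Having established that $\Phi$ maps $N_{2m}(0,2n+1)$ into $N_{2m}(0,2n)$, I would finish by contradiction: if $N_{2m}(0,2n+1)$ had finite index in $\mathcal{PM}(0,2n+1)$, then surjectivity of $\Phi$ would force its image to have finite index in $\mathcal{PM}(0,2n)$; since the image is contained in $N_{2m}(0,2n)$, the latter would also have finite index in $\mathcal{PM}(0,2n)$, contradicting the hypothesis. Therefore $N_{2m}(0,2n+1)$ has infinite index in $\mathcal{PM}(0,2n+1)$, and since $[\mathcal{M}(0,2n+1):\mathcal{PM}(0,2n+1)]$ is finite, it also has infinite index in $\mathcal{M}(0,2n+1)$.

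The only non-bookkeeping step is the case analysis inside $\Phi$'s image computation, specifically handling half-twists that involve the forgotten puncture; this is the main (mild) obstacle, and hinges on the fact that a Dehn twist about a curve bounding a once-punctured disk is trivial in the mapping class group of a punctured surface.
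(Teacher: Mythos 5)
Your argument is correct and is essentially the paper's own proof: restrict to the pure mapping class groups, push $N_{2m}(0,2n+1)$ through the forgetful homomorphism into $N_{2m}(0,2n)$, and transfer infinite index using that $\mathcal{PM}$ has finite index in $\mathcal{M}$. You supply the details the paper leaves implicit (in particular the case analysis showing half-twists involving the forgotten puncture die under the forgetful map), but the route is the same.
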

We note that the proof of Theorem~\ref{Masbaum} and Theorem~\ref{Yuasaeven} can be applied directly in the case of $N_{2m}(0,2n)$ in $\mathcal{PM}(0,2n)$ because $\sigma_1^2\sigma_2^{-2}$ and $(\sigma_1\sigma_2\dots\sigma_{n-1})^{n}\sigma_n(\sigma_1\sigma_2\dots\sigma_{n-1})^{-n}\sigma_n^{-1}$ are elements in $\mathcal{PM}(0,2n)$. 

\begin{QUE}
Is the index of $N_{2m+1}(0,2n+1)$ in $\mathcal{M}(0,2n+1)$ either finite or infinite.
\end{QUE}

\section{The normal closure of powers of Dehn twists in hyperelliptic mapping class groups}\label{hyperelliptic}
As an application of our calculations, we consider the normal closure of powers of certain types of Dehn twists in hyperelliptic mapping class groups. 
Let $\Sigma_g$ be an oriented closed surface of genus $g$. 
The mapping class group $\mathcal{M}(g,0)$ of $\Sigma_g$ is the group of the isotopy classes of orientation-preserving homeomorphisms of $\Sigma_g$. 
We fix a hyperelliptic involution $\iota$ on $\Sigma$, that is, an order $2$ homeomorphism of $\Sigma_g$ acting on $H_1(\Sigma_g)$ by $-\operatorname{Id}$. 
The hyperelliptic mapping class group of $\Sigma_g$ is the centralizer of the isotopy class of $\iota$ in $\mathcal{M}(g,0)$. 
We denote it by $\Delta(g,0)$. 
The mapping class of a $(2g+2)$-punctured sphere and the hyperelliptic mapping class group of $\Sigma_g$ are related by the Birman-Hilden theorem:
\begin{THM}[Birman-Hilden~\cite{BirmanHilden73}]\label{BH}
\[
\Delta(g,0)/\langle\iota\rangle \cong \mathcal{M}(0,2g+2).
\]
\end{THM}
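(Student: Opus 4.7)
The plan is to construct the isomorphism via the branched double cover $\pi\colon\Sigma_g\to\Sigma_g/\langle\iota\rangle\cong S^2$, whose branch locus consists of the $2g+2$ Weierstrass points (fixed points of $\iota$). Pulling back neighborhoods of these branch points gives a canonical identification of $S^2$ minus the branch locus with a $(2g+2)$-punctured sphere, so the natural target of the quotient construction is $\mathcal{M}(0,2g+2)$.

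First, I would define a map $\Phi\colon\Delta(g,0)\to\mathcal{M}(0,2g+2)$ as follows. Given $[f]\in\Delta(g,0)$, pick a representative $f$ with $f\circ\iota=\iota\circ f$ on the nose; such a symmetric representative exists because the centralizer condition is defined up to isotopy and one can average along the $\mathbb{Z}/2$-action by an equivariant smoothing argument. Then $f$ descends to $\bar f\colon S^2\to S^2$ permuting the $2g+2$ branch points, and I set $\Phi([f])=[\bar f]$. The well-definedness requires that if $f_0,f_1$ are symmetric and isotopic in $\Sigma_g$ (through not necessarily symmetric isotopies), then $\bar f_0,\bar f_1$ are isotopic in the punctured sphere; this reduces to the statement that a symmetric homeomorphism isotopic to the identity is symmetrically isotopic to the identity, which I address in the kernel analysis below. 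Surjectivity of $\Phi$ is then straightforward: any homeomorphism of the punctured sphere, being orientation-preserving, lifts through the branched cover by the covering-space lifting criterion applied to the complement of the branch points, and the lift extends over the Weierstrass points by local models near each branch point.

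The main content is identifying the kernel with $\langle\iota\rangle$. Clearly $\iota$ descends to the identity, so $\langle\iota\rangle\subseteq\ker\Phi$. For the reverse inclusion, suppose $f$ is symmetric and $\bar f$ is isotopic to the identity in $\mathcal{M}(0,2g+2)$. I need to show that $f$ is symmetrically isotopic to either the identity or $\iota$. I would proceed by choosing a system of arcs on $S^2$ whose complement is a disk and whose preimage cuts $\Sigma_g$ into two disks (a symmetric cut system); the downstairs isotopy moves $\bar f$ to a homeomorphism fixing these arcs setwise, and I lift this isotopy arc by arc, using the fact that each arc in $S^2\setminus\{\text{branch points}\}$ has a well-defined pair of lifts differing by the deck transformation $\iota$. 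At each stage, the lifted isotopy is canonical up to the $\mathbb{Z}/2$-ambiguity, and by tracking compatibility across intersections (using the Alexander trick on the complementary disks) one concludes that $f$ is isotopic through symmetric homeomorphisms to either the identity or $\iota$.

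The hard part will be the kernel computation, specifically the isotopy-lifting step: arbitrary isotopies downstairs need not lift to symmetric isotopies upstairs, and one must genuinely use combinatorial rigidity of arc systems on the punctured sphere together with the fact that the Alexander method identifies mapping classes with their action on a sufficiently rich arc collection. This is exactly where the original Birman--Hilden argument invests technical work (handling the fixed-point set of $\iota$ under isotopies). Once that lifting step is established, the kernel identification and thus the isomorphism $\Delta(g,0)/\langle\iota\rangle\cong\mathcal{M}(0,2g+2)$ follow immediately.
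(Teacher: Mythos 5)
The paper does not prove this statement; it is quoted verbatim from Birman--Hilden \cite{BirmanHilden73}, so there is no internal proof to compare against. Your outline does follow the standard branched-double-cover strategy that underlies the original argument: quotient by $\iota$ to get $S^2$ with $2g+2$ branch points, descend symmetric homeomorphisms, lift homeomorphisms of the punctured sphere, and identify the kernel with $\langle\iota\rangle$. That is the right architecture.

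There is, however, a genuine gap at the very first step. You assert that every class in $\Delta(g,0)$ has a representative commuting with $\iota$ on the nose because ``one can average along the $\mathbb{Z}/2$-action by an equivariant smoothing argument.'' Homeomorphisms of a surface cannot be averaged; there is no linear structure on $\mathrm{Homeo}(\Sigma_g)$ in which such an average lands back in the group. The correct route is: if $[f]$ centralizes $[\iota]$, then $f\iota f^{-1}$ is an involution isotopic to $\iota$, and one must invoke the theorem that isotopic finite-order homeomorphisms of a hyperbolic surface are conjugate by a homeomorphism isotopic to the identity (Nielsen realization for cyclic groups, or equivalently realization of both involutions as isometries of suitable hyperbolic metrics); replacing $f$ by $hf$ for the resulting $h\simeq\mathrm{id}$ gives the symmetric representative. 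This is a substantive input, not a smoothing trick, and without it the map $\Phi$ is not even defined on all of $\Delta(g,0)$. The second hard step --- that symmetric homeomorphisms isotopic to the identity are \emph{symmetrically} isotopic to the identity or to $\iota$ (the fiber-preserving isotopy property) --- you correctly flag as the technical core, but it is only gestured at; the arc-lifting and Alexander-method sketch would need to be carried out in detail, including the well-definedness of $\Phi$ which you defer to it. As written, the proposal is a plausible roadmap rather than a proof, with one step (the averaging) that is actually wrong as stated.
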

Let $T_{c}$ be a right-handed Dehn twist along a simple closed curve (SCC) $c$ on $\Sigma_g$.
We consider Dehn twists along {\em symmetric non-separating} and {\em symmetric separating simple closed curves}. 
A simple closed curve is {\em symmetric} if $\iota$ preserves the isotopy class of $c$.
We denote a symmetric simple closed curve by an $\iota$-SCC. 
We note that $T_c\in\Delta(g,0)$ if $c$ is an $\iota$-SCC.

Let us study about the normal closure of powers of such Dehn twists using the Birman-Hilden theorem and the projective representation of $\mathcal{M}(0,2g+2)$. 
A projective representation $\Delta(g,0)\to\mathcal{S}_q(1^{\otimes 2g+2})$ is obtained by the composition $\Delta(g,0)\to\mathcal{M}(0,2g+2)$ and $\rho\colon\mathcal{M}(0,2g+2)\to\mathcal{S}_q(1^{\otimes 2g+2})$. 
We denote it also by $rho$.
Let $\mathcal{N}_m^{\iota}(g,0)$ be the normal closure of the $m$-th power of a Dehn twist along a non-separating $\iota$-SCC in $\Delta(g,0)$, and $\hat{\mathcal{N}}_m^{\iota}(g,0)$ the normal subgroup generated by all $\iota$-SCCs in $\Delta(g,0)$. 
We denote them $\mathcal{N}_m^{\iota}$ and $\hat{\mathcal{N}}_m^{\iota}$ for simplicity.
The isomorphism of Theorem~\ref{BH} sends $T_c\in\Delta(g,0)$ to a half twist permuting two punctures in $\mathcal{M}(0,2g+2)$ if $c$ is a non-separating $\iota$-SCC.
Therefore, Theorem~\ref{Stylianakis} and Theorem~\ref{Humphriesinfty} implies Corollary~\ref{Stylianakiscor}.
We consider a generalization of Corollary~\ref{Stylianakiscor} by replacing $\mathcal{N}_m^{\iota}$ by $\hat{\mathcal{N}}_m^{\iota}$. 
Let $\delta_h$ be a separating $\iota$-SCC which decomposes $\Sigma_g$ into two subsurface of genus $h$ and $g-h$ ($h\in\{\,1,2,\dots,g-1\,\}$).
The Dehn twist $T_{\delta_h}\in\Delta_g$ corresponds to $(\sigma_1\sigma_2\dots\sigma_{2h})^{4h+2}$ in $\mathcal{M}(0,2g+2)$.
We already calculate $\rho$ of such elements in (\ref{fulltwist}).
\begin{align*}
\rho((\sigma_1\sigma_2\dots\sigma_{2h})^{(4h+2)m})(\beta_T(a_1,a_2,\dots,a_{2g-1}))
&=q^{\frac{3m(2h+1)}{2}}q^{-\frac{m(a_{2h}^2+2a_{2h})}{2}}\beta_T(a_1,a_2,\dots,a_{2g-1})\\
&=q^{3mh}q^{-\frac{m(a_{2h}-1)(a_{2h}+3)}{2}}\beta_T(a_1,a_2,\dots,a_{2g-1})
\end{align*}
for $0< h< g$.
We only have to consider in the case of $0< h<\lfloor g/2\rfloor$ to determine when $(\sigma_1\sigma_2\dots\sigma_{2h})^{(4h+2)m}$ send to identity in $PGL(\mathbb{C})$ by $\rho$ at a root of unity.
\begin{PROP}\label{sepprop}
Let $q$ be a primitive $r$-th root of unity and $\rho\colon\Delta(g,0)\to\mathcal{S}_q(1^{\otimes 2g+2})$ the projective representation.
Then, 
\begin{enumerate}
\item $\rho(T_{\delta_h}^m)=\operatorname{Id}$ for any $g\geq 2$ and $1\leq h\leq \lfloor g/2 \rfloor$ when $r=4$,
\item $\rho(T_{\delta_h}^m)=\operatorname{Id}$ if $q^{6m}=1$ for any $g\geq 2$ and $1\leq h\leq \lfloor g/2 \rfloor$ when $r=5,6$,
\item $\rho(T_{\delta_1}^m)=\operatorname{Id}$ if $q^{6m}=1$ for $g=2,3$ when $r\geq 7$.
\item $\rho(T_{\delta_h}^m)=\operatorname{Id}$ if $q^{2m}=1$ for any $g\geq 4$ and $1\leq h\leq \lfloor g/2 \rfloor$ when $r\geq 7$.
\end{enumerate}
\end{PROP}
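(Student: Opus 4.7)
The plan is to read off the action of $\rho(T_{\delta_h}^m)$ on the basis $\mathcal{B}_T$ directly from equation~(\ref{fulltwist}), and then reduce the question of whether this operator is trivial in $PGL$ to an elementary constancy condition on $q$-admissible colorings. By the formula preceding the proposition, $\rho(T_{\delta_h}^m)$ is diagonal in $\mathcal{B}_T$ with eigenvalue $q^{3mh}q^{-m(a_{2h}-1)(a_{2h}+3)/2}$ on $\beta_T(a_1,\dots,a_{2g-1})$. Only the single color $a_{2h}$ appears, so $\rho(T_{\delta_h}^m)=\operatorname{Id}$ in $PGL(\mathcal{S}_q(1^{\otimes 2g+2}))$ if and only if $q^{-m(a-1)(a+3)/2}$ is independent of $a$ as $a$ ranges over the values of $a_{2h}$ admitted by some globally $q$-admissible coloring.

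Next I would catalogue that range of values. From the trivalent graph defining $\mathcal{B}_T$, the sequence $a_1,\dots,a_{2g-1}$ starts at $a_1\in\{0,2\}$, ends at $a_{2g-1}\in\{0,2\}$, and changes by $\pm 1$ at each step; hence $a_{2h}$ is a positive odd integer, bounded from above by $2h+1$ on the left and $2g-2h+1$ on the right. The hypothesis $h\leq\lfloor g/2\rfloor$ makes the left bound the smaller one, and $q$-admissibility imposes $a_{2h}\leq r-2$, so the set of realised values is exactly $\{1,3,5,\dots\}\cap[1,\min(2h+1,r-2)]$. Writing $a=2k+1$ and $b=2l+1$ for two such values, one computes
\[
(b-1)(b+3)-(a-1)(a+3)=(b-a)(a+b+2)=4(l-k)(l+k+2),
\]
so the required constancy becomes $q^{2m(l-k)(l+k+2)}=1$ for every admissible pair $(k,l)$.

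The four cases are then read off by tracking which pairs occur. For $r=4$ the admissible set is $\{1\}$, so~(1) is automatic. For $r\in\{5,6\}$, and for $r\geq 7$ with $g\in\{2,3\}$ (where $\lfloor g/2\rfloor=1$ forces $h=1$), the admissible set reduces to $\{1,3\}$ and the unique pair $(k,l)=(0,1)$ yields the condition $q^{6m}=1$, giving~(2) and~(3). For $r\geq 7$ and $g\geq 4$ the choice $h=2$ enlarges the set to $\{1,3,5\}$; the pairs $(0,1)$ and $(0,2)$ produce $q^{6m}=1$ and $q^{16m}=1$, which combine via $\gcd(6,16)=2$ into $q^{2m}=1$, and every other pair or larger choice of $h$ (when present) contributes a condition already implied by $q^{2m}=1$, giving~(4). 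The main point requiring care is the determination of the admissible range in the second step: one must verify that the combined left/right/$q$-admissibility constraints do not exclude any odd integer in $[1,\min(2h+1,r-2)]$, rather than merely checking local three-term admissibility at a single vertex. Once this is settled, the rest of the proof is purely arithmetic and needs no new machinery beyond what was already developed in Sections~\ref{even} and~\ref{remainder}.
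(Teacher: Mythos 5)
Your proposal is correct and follows essentially the same route as the paper: read off the diagonal action of $\rho(T_{\delta_h}^m)$ from the full-twist eigenvalue formula, bound the set of odd values attained by $a_{2h}$, and reduce each case to the constancy of $q^{-m(a-1)(a+3)/2}$ over that set. Your explicit use of the bound $a_{2h}\leq 2h+1$ is in fact slightly more careful than the paper's stated range $\min\{r-2,2g-1\}$ (which matters for case (3) with $g=3$), and since only the ``if'' direction is claimed, containment of the realised values in your set suffices, so the exactness caveat you flag is not actually needed.
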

\begin{proof}
Let $q$ be a primitive $r$-th root of unity such that $r\geq 4$ and $g\geq 2$. 
It is only necessary to consider the value of $\rho(T_{\delta_h}^m)$ for $h=1,2,\dots,\lfloor g/2 \rfloor$. 
The $q$-admissibility implies that the possible values of $a_{2h}$ is in $\{1,3,\dots ,\min\{r-2, 2g-1\}\}$ if $r$ is odd, $a_{2h}\in\{1,3,\dots,\min\{r-3,2g-1\}\}$ if $r$ is even. 
If $r=4$, then $\rho(T_{\delta_h}^m)$ is always the identity in $PGL(\mathbb{C})$ for any $h$. 
If $r= 5,6$, then the value of $a_{2h}$ is $1$ or $3$ for any $h$.  
Therefore, $\rho(T_{\delta_h}^m)=\operatorname{Id}$ if and only if $q^{3mh}=q^{3mh}q^{-6m}$, that is, $q^{6m}=1$.
We assume $r\geq 7$, $g\geq 4$, and $r-2\leq 2g-1$. 
Let us fix $h_0\in\{1,2,\dots,\lfloor g/2 \rfloor\}$. 
Then, $\rho(T_{\delta_{h'}}^m)=\operatorname{Id}$ if and only if 
$q^{3mh'}q^{-\frac{m(a-1)(a+3)}{2}}=q^{3mh'}q^{-\frac{m(a+1)(a+5)}{2}}$ for $a=1,3,\dots r-2$ when $r$ is odd, or for $a=1,3,\dots r-3$ when $r$ is even. Hence $q^{2m}=1$ and this condition is independent of a choice of $h'$. 
The case of $r\geq 7$, $g\geq 4$, and $2g-1\leq r-2$ is the same. 
\end{proof}
Let $c$ be a non-separating $\iota$-SCC and $\delta_h$ a separating $\iota$-SCC bounding genus $h$ subsurface on $\Sigma_g$. 
For non-negative integers $k$ and $l$, we denote the normal closure of $\{\,T_c^{k},T_{\delta_h}^l\mid h=1,2,\dots,\lfloor g/2 \rfloor\,\}$ in $\Delta(g,0)$ by $\mathcal{N}^{\iota}_{(k,l)}$. 
We note that $\mathcal{N}_m^{\iota}=\mathcal{N}_{(m,0)}^{\iota}$ and $\hat{\mathcal{N}}_m^{\iota}=\mathcal{N}_{(m,m)}^{\iota}$. 
If there exists positive integers $a$ and $b$ such that $k=ak'$ and $l=bl'$, then $\mathcal{N}_{(k,l)}^{\iota}\subset\mathcal{N}_{(k',l')}^{\iota}$. 
\begin{THM}\label{klsubgroup}\ 
\begin{enumerate}
\item For $g= 2,3$, $\mathcal{N}^{\iota}_{(6m+3,2m+1)}$ and $\mathcal{N}^{\iota}_{(6m+9,2m+3)}$ has infinite index in $\Delta(g,0)$ if $m\geq 1$.
\item For $g= 2,3$, $\mathcal{N}^{\iota}_{(6m,m)}$ and $\mathcal{N}^{\iota}_{(6m+6,m+1)}$ has infinite index in $\Delta(g,0)$ if $m\geq 2$.
\item For $g\geq 4$, $\mathcal{N}^{\iota}_{(2m,m)}$ and $\mathcal{N}^{\iota}_{(2m+1,2m+1)}=\hat{\mathcal{N}}_{2m+1}$ has infinite index in $\Delta(g,0)$.
\end{enumerate}
\end{THM}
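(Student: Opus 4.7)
The plan is to exploit the projective representation
\[
\rho\colon \Delta(g,0)\longrightarrow PGL(\mathcal{S}_q(1^{\otimes 2g+2}))
\]
obtained by composing the Birman--Hilden quotient of Theorem~\ref{BH} with the representation $\rho$ of Section~\ref{even}. For each pair $(k,l)$ in the statement I will pick a primitive $r$-th root of unity $q$ for which
\begin{itemize}
\item (a) $\rho(T_c^{\,k})=\operatorname{Id}$ for a non-separating $\iota$-SCC $c$,
\item (b) $\rho(T_{\delta_h}^{\,l})=\operatorname{Id}$ for every separating $\iota$-SCC $\delta_h$ with $1\le h\le \lfloor g/2\rfloor$,
\item (c) the image of $\rho$ is infinite.
\end{itemize}
Together (a) and (b) imply $\mathcal{N}^{\iota}_{(k,l)}\subset\ker\rho$, so (c) forces $[\Delta(g,0):\mathcal{N}^{\iota}_{(k,l)}]=\infty$.

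Conditions (a) and (b) translate into numerical conditions on $q$ via the tools already built up. Proposition~\ref{Masbaumprop} says that (a) holds iff $q^{k}=(-1)^{k}$, while Proposition~\ref{sepprop} says that (b) reduces to $q^{6l}=1$ when $g\in\{2,3\}$ and $r\ge 7$, and to $q^{2l}=1$ when $g\ge 4$ and $r\ge 7$. Condition (c) will be ensured by Lemma~\ref{Yuasalemodd} or Lemma~\ref{Yuasalemeven} (according to the parity of $n=g+1$), each producing a primitive $r$-th root of unity whose associated matrix $M$ has infinite order, provided $r\ge 5$ and $r\ne 6$. The task therefore reduces to choosing $r$ so that all three requirements are simultaneously satisfiable.

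In Case (1), the identity $k=3l$ in each sub-pair lets us take $r=2k=6l$. Any primitive $r$-th root of unity $q$ then satisfies $q^{r}=1$ and $q^{k}=-1=(-1)^{k}$ automatically, so (a) and (b) both hold; for $m\ge 1$ we have $r\ge 18$, well within the range of Lemmas~\ref{Yuasalemodd} and~\ref{Yuasalemeven}, giving (c). In Case (2), $k=6l$ is even, and $r=6l$ makes $q^{k}=1$ and $q^{6l}=1$ hold simultaneously; for $m\ge 2$ we have $r\ge 12$. In Case (3), for $(k,l)=(2m,m)$ the condition collapses to $q^{2m}=1$, and we take any divisor $r$ of $2m$ with $r\ge 5$ and $r\ne 6$; for $(k,l)=(2m+1,2m+1)$ we take $r=4m+2$, so that a primitive $r$-th root of unity gives $q^{2m+1}=-1$ and $q^{4m+2}=1$ at once.

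The main technical obstacle is verifying that the $r$ dictated by (a) and (b) is large enough and avoids the forbidden value $r=6$ of Lemmas~\ref{Yuasalemodd} and~\ref{Yuasalemeven}; this is exactly what forces the implicit lower bounds on $m$ in each case. The small-$m$ corners where the natural $r$ would equal $4$ or $6$ can be absorbed via the inclusion $\mathcal{N}^{\iota}_{(k,l)}\subset \mathcal{N}^{\iota}_{(k,0)}=\mathcal{N}_{k}^{\iota}$, together with Corollary~\ref{Stylianakiscor}, which already handles $\mathcal{N}_{k}^{\iota}$ for $k\ge 4$.
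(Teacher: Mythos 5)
Your main argument is the same as the paper's: for each pair $(k,l)$ you choose $r$ so that any primitive $r$-th root of unity satisfies $q^{k}=(-1)^{k}$ (killing $\rho(T_c^{k})$ by Proposition~\ref{Masbaumprop}) and the relevant condition of Proposition~\ref{sepprop} (killing $\rho(T_{\delta_h}^{l})$), and then invoke Lemma~\ref{Yuasalemodd} or~\ref{Yuasalemeven} to get infinite image; your choices $r=2k=6l$ in case (1), $r=6l$ in case (2), and $r\mid 2m$ resp.\ $r=4m+2$ in case (3) are exactly (or consistent with) the paper's, and the verification that $r\ge 5$, $r\ne 6$ in cases (1) and (2) is correct.

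The one genuine flaw is your closing remark. The inclusion you invoke is backwards: $\mathcal{N}^{\iota}_{(k,l)}$ is the normal closure of the \emph{larger} generating set $\{T_c^{k},T_{\delta_h}^{l}\}$, so $\mathcal{N}^{\iota}_{(k,l)}\supseteq \mathcal{N}^{\iota}_{(k,0)}=\mathcal{N}_k^{\iota}$, not $\subseteq$. Knowing that the smaller subgroup $\mathcal{N}_k^{\iota}$ has infinite index (Corollary~\ref{Stylianakiscor}) says nothing about the index of the larger one, so this device cannot absorb the small-$m$ corners of case (3) where no admissible $r$ exists (e.g.\ $(2m,m)$ with $m\le 3$, or $(2m+1,2m+1)$ with $m=1$, where the forced $r$ is $2$, $4$, or $6$). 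To be fair, the paper's own proof of case (3) is a one-line ``similar way'' and does not address these values of $m$ either (and indeed the Remark after Lemma~\ref{Yuasalemeven} shows the method genuinely fails at $r=4,6$); but your proposed patch is not a valid substitute, and you should either restrict $m$ in case (3) or find a different argument there.
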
 
\begin{proof}
We consider a situation that $\rho(T_c^{k})$ is the identity in $PGL(\mathcal{S}_q(1^{\otimes 2g+2}))$, that is, $q^{k}=(-1)^k$ by Proposition~{\ref{Masbaum}}. 
If $g=2,3$ and $k=6m+3$,
we take the primitive $2(6m+3)$-th root of unity in Lemma~\ref{Yuasalemodd} or Lemma~\ref{Yuasalemeven}. 
Then, $\rho(T_{\delta_h}^{(2m+1)})=\operatorname{Id}$ because of Proposition~\ref{sepprop}~(3) and $q^{6(2m+1)}=1$. 
This proves $\mathcal{N}^{\iota}_{(6m+3,2m+1)}$ has infinite index in $\Delta(g,0)$. 
In the case of $\mathcal{N}^{\iota}_{(6m+9,2m+3)}$, we can prove in the exact same way.
If $g=2,3$ and $k=6m$, 
we take the primitive $6m$-th root of unity in Lemma~\ref{Yuasalemodd} or Lemma~\ref{Yuasalemeven}. 
Then, $\rho(T_{\delta_h}^m)=\operatorname{Id}$ because of Proposition~\ref{sepprop}~(3) and $q^{6m}=1$. 
For $k=6m+6$, we can prove in the same way.
If $g\geq 4$ and $k=2m, 2m+1$, we can prove in the similar way by using Proposition~\ref{sepprop}~(4).
\end{proof}

\begin{COR}\
For any $g\geq 2$, $\hat{\mathcal{N}}_m^{\iota}$ has infinite index in $\Delta(g,0)$ if $m\geq 5$ and $m\neq 6$. 
\end{COR}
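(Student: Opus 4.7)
The plan is to produce, for each $g\geq 2$ and each $m\geq 5$ with $m\neq 6$, a single primitive root of unity $q$ such that the projective representation $\rho\colon \Delta(g,0)\to PGL(\mathcal{S}_q(1^{\otimes 2g+2}))$ (the composition of the Birman-Hilden quotient with the $\rho$ of Section~\ref{even}) sends every $m$-th power of a Dehn twist along a symmetric simple closed curve to the identity while having infinite image. Once this is achieved, $\hat{\mathcal{N}}_m^{\iota}=\mathcal{N}_{(m,m)}^{\iota}$ is contained in $\ker\rho$, and the kernel has infinite index, so $\hat{\mathcal{N}}_m^{\iota}$ has infinite index in $\Delta(g,0)$.

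The choice of $q$ is dictated by the parity of $m$. I would set $r=2m$ if $m$ is odd and $r=m$ if $m$ is even, so that in both cases $r\geq 7$, $r\geq 5$, and $r\neq 6$. Letting $n=g+1$ so that $2n=2g+2$, I would then invoke Lemma~\ref{Yuasalemodd} when $n$ is odd or Lemma~\ref{Yuasalemeven} when $n$ is even to produce a primitive $r$-th root of unity $q$ for which $\rho_T$ of the element $(\sigma_1\cdots\sigma_{n-1})^n\sigma_n(\sigma_1\cdots\sigma_{n-1})^{-n}\sigma_n^{-1}$ has infinite order. Pulling this braid back to $\Delta(g,0)$ via Theorem~\ref{BH} supplies the infinite-order element in $\rho(\Delta(g,0))$ that we need.

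I would then check that the same $q$ annihilates the $m$-th powers of the two kinds of Dehn twists. For a non-separating $\iota$-SCC $c$, Proposition~\ref{Masbaumprop} applies because $q^m=(-1)^m$ by construction (for odd $m$, $q^m=-1$ since $q$ is a primitive $2m$-th root of unity; for even $m$, $q^m=1$). For a separating $\iota$-SCC $\delta_h$, since $r\geq 7$, Proposition~\ref{sepprop}(3) is in force when $g\in\{2,3\}$ (where only $\delta_1$ exists) and Proposition~\ref{sepprop}(4) is in force when $g\geq 4$; the respective hypotheses $q^{6m}=1$ and $q^{2m}=1$ both follow immediately from $r\mid 2m$ and $q^r=1$. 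Hence every generator of $\hat{\mathcal{N}}_m^{\iota}$ lies in $\ker\rho$, completing the argument.

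The only real obstacle is synchronizing three simultaneous constraints on $q$, coming from Proposition~\ref{Masbaumprop}, Proposition~\ref{sepprop}, and Lemmas~\ref{Yuasalemodd}--\ref{Yuasalemeven}. The parity-based recipe $r\in\{2m,m\}$ reduces the separating-twist constraint to an automatic divisibility and folds the non-separating constraint into the primitive root condition itself. The exclusion $m\neq 6$ enters only through the even case, where $r=m$, and is exactly what keeps $r$ away from the value $6$ that is forbidden in Lemmas~\ref{Yuasalemodd} and~\ref{Yuasalemeven}.
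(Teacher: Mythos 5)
Your proposal is correct and follows essentially the same route as the paper: choose $r=2m$ (odd $m$) or $r=m$ (even $m$), take the primitive $r$-th root of unity supplied by Lemma~3.2 or Lemma~3.4 according to the parity of $n=g+1$, and kill the non-separating and separating twists via Proposition~2.6 and Proposition~4.2(3)--(4) respectively. The paper's own proof is just a terser version of this, deferring the details to the proof of Theorem~4.3.
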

\begin{proof}
We remark that some cases are derived from Theorem~\ref{klsubgroup}. 
It is easily shown by the same way to the proof of Theorem~\ref{klsubgroup}.
If $m\geq 8$ is an even integer (resp. $m\geq 5$ is an odd integer), we take the primitive $m$-th (resp. $2m$-th) root of unity in Lemma~{\ref{Yuasalemodd}} or Lemma~{\ref{Yuasalemodd}}, and use Proposition~\ref{sepprop}~(3) and (4). 
\end{proof}

%Finally, we note a question about the subgroup $\mathcal{N}^{\iota}_{(k,l)}$ in $\Delta(g,0)$.
%\begin{QUE}
%When does $\mathcal{N}^{\iota}_{(k,0)}$ contain $\mathcal{N}^{\iota}_{(0,l)}$?
%\end{QUE}
%A trivial case is that $\mathcal{N}^{\iota}_{(0,l)}\subset\mathcal{N}^{\iota}_{(1,0)}=\Delta(g,0)$ for any $l$.
%Lemma~2.2 and Lemma~3.1 in \cite{Humphries92} imply that $\mathcal{N}^{\iota}_{(0,l)}\subset \mathcal{N}^{\iota}_{(2,0)}$ and  $\mathcal{N}^{\iota}_{(0,l)}\subset \mathcal{N}^{\iota}_{(3,0)}$ for any $l$ and $g=2,3$.
\bibliographystyle{amsalpha}
\bibliography{puncturedMCG_power}

\def\cprime{$'$}
\providecommand{\bysame}{\leavevmode\hbox to3em{\hrulefill}\thinspace}
\providecommand{\MR}{\relax\ifhmode\unskip\space\fi MR }
% \MRhref is called by the amsart/book/proc definition of \MR.
\providecommand{\MRhref}[2]{%
  \href{http://www.ams.org/mathscinet-getitem?mr=#1}{#2}
}
\providecommand{\href}[2]{#2}
\begin{thebibliography}{Hum92}

\bibitem[BH73]{BirmanHilden73}
Joan~S. Birman and Hugh~M. Hilden, \emph{On isotopies of homeomorphisms of
  {R}iemann surfaces}, Ann. of Math. (2) \textbf{97} (1973), 424--439.
  \MR{0325959}

\bibitem[Bir74]{Birman74}
Joan~S. Birman, \emph{Braids, links, and mapping class groups}, Princeton
  University Press, Princeton, N.J.; University of Tokyo Press, Tokyo, 1974,
  Annals of Mathematics Studies, No. 82. \MR{0375281}

\bibitem[Fun99]{Funar99}
Louis Funar, \emph{On the {TQFT} representations of the mapping class groups},
  Pacific J. Math. \textbf{188} (1999), no.~2, 251--274. \MR{1684208}

\bibitem[Hum92]{Humphries92}
Stephen~P. Humphries, \emph{Normal closures of powers of {D}ehn twists in
  mapping class groups}, Glasgow Math. J. \textbf{34} (1992), no.~3, 313--317.
  \MR{1181771}

\bibitem[Jon87]{Jones87}
V.~F.~R. Jones, \emph{Hecke algebra representations of braid groups and link
  polynomials}, Ann. of Math. (2) \textbf{126} (1987), no.~2, 335--388.
  \MR{908150}

\bibitem[KL94]{KauffmanLins94}
Louis~H. Kauffman and S{\'o}stenes~L. Lins, \emph{Temperley-{L}ieb recoupling
  theory and invariants of {$3$}-manifolds}, Annals of Mathematics Studies,
  vol. 134, Princeton University Press, Princeton, NJ, 1994. \MR{1280463
  (95c:57027)}

\bibitem[Lic93]{Lickorish93B}
W.~B.~R. Lickorish, \emph{Skeins and handlebodies}, Pacific J. Math.
  \textbf{159} (1993), no.~2, 337--349. \MR{1214075}

\bibitem[Lic97]{Lickorish97}
\bysame, \emph{An introduction to knot theory}, Graduate Texts in Mathematics,
  vol. 175, Springer-Verlag, New York, 1997. \MR{1472978}

\bibitem[Mas99]{Masbaum99}
Gregor Masbaum, \emph{An element of infinite order in {TQFT}-representations of
  mapping class groups}, Low-dimensional topology ({F}unchal, 1998), Contemp.
  Math., vol. 233, Amer. Math. Soc., Providence, RI, 1999, pp.~137--139.
  \MR{1701678}

\bibitem[Mas17]{Masbaum17}
\bysame, \emph{On powers of half-twists in {$M(0,2n)$}}, Glasg. Math. J.
  (2017), 1--6.

\bibitem[New72]{Newman72}
Morris Newman, \emph{Integral matrices}, Academic Press, New York-London, 1972,
  Pure and Applied Mathematics, Vol. 45. \MR{0340283}

\bibitem[Sty17]{Stylianakis17}
Charalampos Stylianakis, \emph{The normal closure of a power of a half-twist
  has infinite index in the mapping class group of a punctured sphere}, J.
  Topol. Anal. (2017), 1--20.

\end{thebibliography}
\end{document}